\newtheorem{theorem}{Theorem}
\theoremstyle{definition}
	\newtheorem{lemma}[theorem]{Lemma}
	\newtheorem{cor}[theorem]{Corollary}
	\newtheorem{mdef}[theorem]{Definition}
     \newcommand{\LL}{\mathcal{L}}
     \newcommand{\At}{\mathsf{At}}
     \newcommand{\B}{\mathbf{B}}
     \newcommand{\R}{\mathbf{R}}
     \newcommand{\bun}{\mathcal{B}}
     \newcommand{\seq}{\mathsf{seq}}
     \newcommand{\rseq}{\mathsf{rseq}}
     \newcommand{\nd}[1]{\mathbf{#1d}}
     \newcommand{\ndd}[1]{\mathbf{#1dr}}
     \newcommand{\squigto}{\rightsquigarrow}
     \newcommand{\Bpd}{\mathbf{Bpd}}
     \newcommand{\cf}{\mathsf{cf}}
    \newcommand{\red}{\mathsf{red}}
\title{Hyperformalism for Bunched Natural Deduction Systems}
\author{Shay Allen Logan}
\email{salogan@ksu.edu}
\author{Blaine Worley}
\email{bdworley@ucdavis.edu}
\begin{document}
    
\begin{abstract}
    Logics closed under classes of substitutions broader than class of uniform substitutions are known as hyperformal logics. This paper extends known results about hyperformal logics in two ways. First: we examine a very powerful form of hyperformalism that tracks, for bunched natural deduction systems, essentially all the intensional content that can possibly be tracked. We demonstrate that, after a few tweaks, the well-known relevant logic $\mathbf{B}$ exhibits this form of hyperformalism. Second: we demonstrate that not only can hyperformalism be extended along these lines, it can also be extended to accommodate not just what is proved in a given logic but the proofs themselves. Altogether, the paper demonstrates that the space of possibilities for the study of hyperformalism is much larger than might have been expected.
\end{abstract}\maketitle

    Formal logics are closed under uniform substitutions. What about logics that---in addition to being closed under uniform substitutions---are closed under nonuniform substitutions as well? In \cite{logan2022} it was shown that some naturally-arising relevant logics had such a feature. In virtue of the fact that this intuitively makes such logics \textit{yet more formal}, they were labeled `hyperformal'. 

    Hyperformalism looks at first glance like an undesirable feature for a logic to have. We require that logics be \textit{formal} because we recognize that logic alone cannot guarantee that distinct atoms have the same meaning, and thus cannot prevent our replacing them with distinct formulas. The hyperformal logic investigated in \cite{logan2022} allows us to go further and replace the same atom occurring in appropriately different contexts with distinct formulas. But it seems logic alone \textit{should} be enough to guarantee that the distinct occurrences of the same atom---no matter the context---are nonetheless distinct occurrences of things that have the same meaning. 

    This can be challenged along roughly the following lines. First: topic, as argued in e.g. \cite{Yablo2014} or \cite{Berto2022} seems to be intimately connected to meaning. Shifts in topic might thus plausibly be taken to induce shifts in meaning. Second: there's a long tradition---a tradition made most explicit in works like \cite{Angell1977}, \cite{Angell1989}, and \cite{Fine2016}---of supposing that certain intensional operators like the negation and the conditional are \textit{topic transformative}. On the thesis that topics and meanings are intimately connected, then such operators might plausibly be taken to be meaning-transformative. If so, atoms occurring in distinct intensional contexts cannot be assumed to have the same meaning. So logic alone cannot guarantee that they in fact do have the same meaning and must thus permit that they be replaced by distinct formulas.

    That was, at best, a sketchy reformulation of a few strands of serious philosophical work. Whether it hangs together isn't entirely clear. The reader who finds it sketchy ought to think of this as an invitation to flesh it out themselves.

    Regardless of whether the story works, the following is true: given different classes of not-necessarily-uniform substitutions, we obtain different flavors of hyperformalism. The first alternative to the `depth hyperformalism' of \cite{logan2022} was presented in \cite{ferguson2023topic}. Based on what the authors know of investigations ongoing in the community, it seems there are a good number of further such flavors forthcoming. This paper, in part, makes a contribution to this proliferation of flavors of hyperformalism.
    
    But it also extends the discussion along another dimension. Worley showed in~\cite{worley2024} that depth substitutions act on Hilbert proofs in a natural way, and that in fact sets of Hilbert proofs in certain relevant logics are also depth hyperformal. This result is tantalizing, as it suggests that hyperformalism might be both a more natural and a more omnipresent phenomenon than it initially appeared to be. Tantalizing as Worley's result is, the fact that it was limited to consideration of Hilbert proofs---a thoroughly suboptimal sort of proof-theoretic object---was a major limitation. So, in part to make good on the tantalizingness of Worley's first result, we'll first turn to showing that a number of classes of more natural proof-theoretic objects associated with relevance logics are also depth hyperformal. 

    Before turning to a summary of the paper, we return briefly to the topic of topic. Readers who found themselves skeptical of the topic motivation should note the following about the technical work in the paper. First, the logics we will be investigating---and which we will show possess striking forms of hyperformalism---are not logics tailor-made for this job. Indeed, in every case, they are logics that have arisen quite naturally in other contexts. Fans of these logics who reject the topical motivation given above are thus left with an explanatory burden: given that these logics are in fact hyperformal in the identified ways, if it's not topical considerations that explain it, then what is it that does?

    The outline of the paper is as follows. In Section 1 we set things up by defining our language and the several logics we will be working with. We recap known results about variable sharing and hyperformalism in Section 2. Section 3 extends these results to the proofs in the bunched natural deduction systems presented in \cite{Read1988}. We then turn to a second axis of extension in Section 4, where we present a much broader class of substitutions these systems are closed under. We discuss the results in the conclusion. A pair of appendices contain proofs of results that, while important in the paper, are tangential to its main aims.

\section{Setup}

    We define the sets $\LL$ and $\bun$ as follows:
    \begin{itemize}
        \item $\At=\{p_i\}_{i=1}^\infty\subseteq\LL$. The members of $\At$ are called \textit{atomic} formulas.
        \item If $ A \in\LL$ and $ B \in\LL$, then $\neg A \in\LL$, $( A \land B )\in\LL$, $( A \lor B )\in\LL$, $( A \to B )\in\LL$, and $( A \circ B )\in\LL$. Members of $\LL$ are called \textit{formulas}.
        \item $\LL\subseteq\bun$. Thought of as members of $\bun$, we call the members of $\LL$ \textit{atomic} bunches.
        \item If $ X \in\bun$ and $ Y \in\bun$, then $( X , Y )\in\bun$ and $( X ; Y )\in\bun$. Members of $\bun$ are called \textit{bunches}.
    \end{itemize}
    We will write $\LL^-$ for the $\circ$-free fragment of $\LL$, and we drop outermost parentheses as per the usual conventions. A \textit{consecution} has the form $\Gamma\Yright A$ with $\Gamma\in\bun$ and $A\in\LL$. 

    We will work with a few different logics throughout the paper. For convenience, we will introduce all of them here---some more than once! 

    The weakest logic we will consider is the logic $\mathbf{B}$ that was identified in \cite{RLR} as the \textbf{B}asic relevant logic and which is the main object investigated in \cite{logan2024}. Axiomatically, it can be presented as follows:
    \begin{enumerate}[{A}1)]
        \item $A\to A$
        \item $(A\land B)\to A$
        \item $(A\land B)\to B$
        \item $((A\to B)\land(A\to C))\to(A\to(B\land C))$
        \item $A\to(A\lor B)$ 
        \item $B\to(A\lor B)$
        \item $((A\to C)\land(B\to C))\to((A\lor B)\to C)$
        \item $(A\land(B\lor C))\to((A\land B)\lor(A\land C))$
        \item $\neg\neg A\to A$
    \end{enumerate}
    \begin{enumerate}[{R}1)]
        \item If $A$ is a theorem and $B$ is a theorem, then $A\land B$ is a theorem.\footnote{We're saying these `out loud' as it were because the comma and the turnstile have already become overloaded and we don't want to cause any more confusion than we have to.}
        \item If $A\to B$ is a theorem and $A$ is a theorem, then $B$ is a theorem.
        \item If $A\to\neg B$ is a theorem, then $B\to\neg A$ is a theorem.
        \item If $A\to B$ and $C\to D$ are theorems, then $(B\to C)\to(A\to D)$ is a theorem. 
    \end{enumerate}
    We write $\mathbf{hB}$ for the set of theorems generated by these axioms and rules. As a bunch-formula natural deduction system, we can instead present this logic as in Figure~\ref{fig:cons}. Presented in this way, we will call the logic $\mathbf{B}$, rather than $\mathbf{hB}$. That $\mathbf{hB}$ and $\mathbf{B}$ are sufficiently similar to be regarded as the same logic is not obvious. Nonetheless, this has been proved in various places, including most prominently in \cite{Read1988}.

        \begin{figure}[htb]
    \begin{framed}
        \centering
        \textbf{Axiom}\\
        \vspace{.2in}
        \begin{prooftree}
            \infer0[(id)]{A \Yright A}
        \end{prooftree}\\
        \vspace{.2in}
        \textbf{Operational Rules}\\
        \vspace{.2in}
        \begin{prooftree}
            \hypo{X;A\Yright B}
            \infer1[($\to$I)]{X\Yright A\to B}
        \end{prooftree}
        \qquad
        \begin{prooftree}
            \hypo{X\Yright A\to B}
            \hypo{Y\Yright A}
            \infer2[($\to$E)]{X;Y\Yright B}
        \end{prooftree}\\
        \vspace{.2in}
        \begin{prooftree}
            \hypo{X\Yright A}
            \infer1[($\lor$I$_1$)]{X\Yright A\lor B}
        \end{prooftree}
        \qquad
        \begin{prooftree}
            \hypo{X\Yright B}
            \infer1[($\lor$I$_2$)]{X\Yright A\lor B}
        \end{prooftree}\\
        \vspace{.2in}
        \begin{prooftree}
            \hypo{X\Yright A\lor B}
            \hypo{Y(A)\Yright C}
            \hypo{Y(B)\Yright C}
            \infer3[($\lor$E)]{Y(X)\Yright C}
        \end{prooftree}\\
        \vspace{.2in}
        \begin{prooftree}
            \hypo{X\Yright A}
            \hypo{Y\Yright B}
            \infer2[($\land$I)]{X,Y\Yright A\land B}
        \end{prooftree}
        \qquad
        \begin{prooftree}
            \hypo{X\Yright A\land B}
            \hypo{Y(A,B)\Yright C}
            \infer2[($\land$E)]{Y(X)\Yright C}
        \end{prooftree}\\
        \vspace{.2in}
        \begin{prooftree}
            \hypo{X\Yright A}
            \hypo{Y\Yright B}
            \infer2[($\circ$I)]{X;Y\Yright A\circ B}
        \end{prooftree}
        \qquad
        \begin{prooftree}
            \hypo{X\Yright A\circ B}
            \hypo{Y(A;B)\Yright C}
            \infer2[($\circ$E)]{Y(X)\Yright C}
        \end{prooftree}\\
        \vspace{.2in}
        \begin{prooftree}
            \hypo{X\Yright B}
            \hypo{A\Yright\neg B}
            \infer2[($\neg$I)]{X\Yright \neg A}
        \end{prooftree}
        \qquad
        \begin{prooftree}
            \hypo{X\Yright\neg\neg A}
            \infer1[($\neg$E)]{X\Yright A}
        \end{prooftree}\\
        \vspace{.2in}
        \textbf{Structural Rules}\\
        \begin{multicols}{2}
            \begin{prooftree}
                \hypo{X\Yright A}
                \hypo{Y(A)\Yright B}
                \infer2[[Cut]]{Y(X)\Yright B}
            \end{prooftree}
            \columnbreak
            \begin{align*}
                X,(Y,Z) &\leftsquigarrow (X,Y),Z\tag{$\mathsf{eB}$}\\
                X,Y &\leftsquigarrow Y,X\tag{$\mathsf{eC}$}\\
                X,X &\leftsquigarrow X\tag{$\mathsf{eW}$}\\
                X &\leftsquigarrow X,Y\tag{$\mathsf{eK}$}
            \end{align*}
        \end{multicols}
        
    \end{framed}
        \label{fig:cons}
        \caption{Natural deduction rules for $\B$}
    \end{figure}

    In addition to $\mathbf{hB}$, there are three other axiomatic systems we will be interested in. While each of these systems can be presented more efficiently, it suffices to describe them as arising from the addition of axioms and rules to $\mathbf{hB}$. To obtain the first such system, $\mathbf{hDR}^-$, we add the axiom and rule schema below to $\mathbf{hB}$.
    \begin{enumerate}[{A}8)]
        \item $(A\to\neg B)\to(B\to\neg A)$
    \end{enumerate}
    \begin{enumerate}[{R}5)]
        \item $A\Rightarrow\neg(A\to\neg A)$
    \end{enumerate}
    To obtain $\mathbf{hDR}$, we extend $\mathbf{hDR}^-$ with the following axioms and rules.
    \begin{enumerate}[{A}1)]
        \setcounter{enumi}{8}
        \item $((A\to B)\land(B\to C))\to(A\to C)$
        \item $A\lor\neg A$
    \end{enumerate}
    \begin{enumerate}[{R}1)]
        \setcounter{enumi}{7}
        \item $C\lor (A\to B), C\lor A\Rightarrow C\lor B$
        \item $C\lor A\Rightarrow C\lor\neg(A\to\neg A)$
        \item $E\lor(A\to B), E\lor(C\to D)\Rightarrow E\lor((B\to C)\to(A\to D))$
    \end{enumerate}

    For the logic $\mathbf{R}$, we will need both an axiomatic presentation (which we call $\mathbf{hR}$) and a natural deduction presentation (which we call simply $\mathbf{R}$). For the former presentation, we extend $\mathbf{hDR}$ by the following schema additions.

    \begin{enumerate}[{A}1)]
        \setcounter{enumi}{10}
        \item $(A\to B)\to((B\to C)\to(A\to C))$
        \item $(A\to B)\to((C\to A)\to(C\to B))$
        \item $(A\to(A\to B))\to(A\to B)$
        \item $A\to((A\to B)\to B)$
    \end{enumerate}
    
    For the natural deduction presentation of $\R$, we slightly alter $\B$'s negation introduction and tack on a few structural rules as described in Figure 2. That these systems are equivalent is again deferred to~\cite{Read1988}.

    \begin{figure}[htb]
    \begin{framed}
        \centering
        \begin{multicols}{2}
        \textbf{Operational Rule}\\
        \vspace{.2in}
            \begin{prooftree}
                \hypo{X;A\Yright \neg B}
                \hypo{Y\Yright B}
                \infer2[($\neg\mathrm{I}_2$)]{X;Y\Yright \neg A}
            \end{prooftree}
            \columnbreak
            
            \textbf{Structural Rules}\\
            \vspace{-.03in}
            \begin{align*}
                X;(Y;Z) &\leftsquigarrow (X;Y);Z\tag{$\mathsf{B}$}\\
                X;Y &\leftsquigarrow Y;X\tag{$\mathsf{C}$}\\
                X;X &\leftsquigarrow X\tag{$\mathsf{W}$}
            \end{align*}
        \end{multicols}
        
    \end{framed}
        \label{R}
        \caption{Additional natural deduction rules for $\mathbf{R}$}
    \end{figure}

    By observing the axiomatic presentation of these logics, we note that the following obvious containments hold. 
    \begin{displaymath}
        \mathbf{hB}\subseteq\mathbf{hDR}^-\subseteq\mathbf{hDR}\subseteq\mathbf{hR}.
    \end{displaymath}

    Equally clearly, for the natural deduction systems we have that $\mathbf{B}\subseteq\mathbf{R}$. 

    \section{Known Results}

    \begin{mdef}\label{def:depth_tree}
        Where $X$ is a bunch, a \textit{depth-annotated parsing tree for $X$} is a tree whose root is
        \begin{displaymath}
            \underbrace{X}_0
        \end{displaymath}   
        And which is closed under the following extension rules:
        \begin{align*}
            \xymatrix@C=0mm@R=4mm{
                & \underbrace{X;Y}_n\ar[dr]\ar[dl] & \\
                \underbrace{X}_{n-1} & & \underbrace{Y}_{n}
            }
            \qquad
            \xymatrix@C=0mm@R=4mm{
                & \underbrace{X,Y}_n\ar[dr]\ar[dl] & \\
                \underbrace{X}_{n} & & \underbrace{Y}_{n}
            }
            \qquad            
            \xymatrix@C=0mm@R=4mm{
                & \underbrace{A\land B}_n\ar[dr]\ar[dl] & \\
                \underbrace{A}_{n} & & \underbrace{B}_{n}
            }
            \\
            \xymatrix@C=0mm@R=4mm{
                \underbrace{\neg A}_n\ar[d]\\
                \underbrace{A}_{n}
            }
            \qquad
            \xymatrix@C=0mm@R=4mm{
                & \underbrace{A\lor B}_n\ar[dr]\ar[dl] & \\
                \underbrace{A}_{n} & & \underbrace{B}_{n}
            }
            \qquad
            \xymatrix@C=0mm@R=4mm{
                & \underbrace{A\to B}_n\ar[dr]\ar[dl] & \\
                \underbrace{A}_{n+1} & & \underbrace{B}_{n+1}
            }
            \qquad
            \xymatrix@C=0mm@R=4mm{
                & \underbrace{A\circ B}_n\ar[dr]\ar[dl] & \\
                \underbrace{A}_{n-1} & & \underbrace{B}_{n}
            }
        \end{align*}
    \end{mdef}

    \begin{mdef}\label{def:depth}
        If $\underbrace{Y}_n$ occurs in the depth-annotated parsing tree for $X$, then $Y$ is a \textit{subexpression} of $X$, and $n$ is the depth of that occurrence of $Y$ in $X$. If $Y$ is a bunch, then it is a subbunch; if $Y$ is a formula, then it is a subformula.
    \end{mdef}

    Note the lopsided behavior of the fusion and the semicolon: \textit{only} the bunch/formula on left side of---and \textit{not} the bunch/formula on the right side of---a semicolon/fusion changes depth. Also note that in both cases, the depth \textit{decreases}. Thus we can in fact arrive at negative depths (heights?)---e.g. $p$ occurs at depth -1 (height 1?) in $p;q$. All of this is, we grant, a bit odd. None of it is a mistake.
     
    \begin{mdef}\label{def:share}
        Bunches $X$ and $Y$ \textit{share a variable} if some $p\in\At$ is a subexpression of both $X$ and $Y$. They \textit{depth-share a variable} if some $p\in\At$ occurs at depth $n$ in both $X$ and $Y$.
    \end{mdef}

    The classic results about shared variables are the following:
    
    \begin{theorem}[Belnap 1960]\label{th:belnap}
        If $A\to B$ is a theorem of $\mathbf{hR}$ (formulated in $\LL^-$), then $A$ and $B$ share a variable.
    \end{theorem}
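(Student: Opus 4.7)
The plan is to reprove Belnap's theorem by matrix semantics, in the spirit of his original argument. I would exhibit a finite logical matrix $\mathcal{M} = \langle M, \wedge, \vee, \to, \neg, D\rangle$, where $D\subseteq M$ is a designated subset, with two properties: (i) every theorem of $\mathbf{hR}$ takes a value in $D$ under every assignment $v\colon\At\to M$ extended homomorphically to $\LL^-$; and (ii) whenever $A$ and $B$ share no variable, there is some $v$ under which $v(A\to B)\notin D$. The theorem then follows immediately by contraposition: if $A\to B$ were a theorem without shared variables, (i) would force $v(A\to B)\in D$, contradicting (ii).

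For $\mathcal{M}$ I would use Belnap's well-known four-element structure, which suffices in the $\circ$-free fragment. Its underlying set partitions into two strata $M = M_1 \cup M_2$, with the key feature that $a\to b$ is undesignated whenever $a$ and $b$ lie in different strata; the $\wedge, \vee, \neg$ tables respect the partition, so that if every atom of a formula $C$ is assigned a value in $M_i$, then $v(C)\in M_i$ as well.

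For (i), verify by induction on the length of a derivation in $\mathbf{hR}$ that each of A1)--A15) is designated under every assignment and that each of R1)--R10) preserves designation. This is the main obstacle: the axiomatic presentation here accumulates many schemata---the additions from $\mathbf{hDR}^{-}$, $\mathbf{hDR}$, and the permutation, contraction, and assertion axioms of $\mathbf{hR}$ all require inspection---and while each individual check is a routine four-element computation, the bookkeeping is substantial. A reasonable shortcut is to cite the standard verification from the literature rather than redo it from scratch.

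For (ii), given $A, B\in\LL^-$ with disjoint atomic content, define $v$ by sending every atom occurring in $A$ to a chosen element of $M_1$ and every atom occurring in $B$ to a chosen element of $M_2$ (atoms in neither can be mapped arbitrarily). With $v$ selected so that $v(A)$ attains the ``most true'' value available in $M_1$ and $v(B)$ attains the ``least true'' value available in $M_2$, the table for $\to$ forces $v(A\to B)\notin D$, as required.
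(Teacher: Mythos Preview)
Your proposal follows exactly the approach the paper relies on: the paper does not give its own proof of this theorem but simply cites Belnap (1960) and Anderson--Belnap (1975), and the matrix method you outline is precisely theirs. One small correction: Belnap's matrix for $\mathbf{R}$ has eight elements (two strata of four each), not four, so that detail should be adjusted---but the logical structure of your argument is unaffected.
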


    \begin{theorem}[Brady 1984]
        If $A\to B$ is a theorem of $\mathbf{hDR}$ (formulated in $\LL^-$), then $A$ and $B$ depth-share a variable.
    \end{theorem}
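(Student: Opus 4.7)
The plan is to reduce Brady's theorem to Belnap's theorem (Theorem~\ref{th:belnap}) via a depth-substitution argument. First, I would define a \emph{depth-substitution} as a map $\sigma : \At \times \mathbb{Z} \to \LL^-$, and for each $n \in \mathbb{Z}$ let $\sigma_n$ be the formula operation given by $\sigma_n(p) = \sigma(p,n)$, $\sigma_n(\neg A) = \neg \sigma_n(A)$, $\sigma_n(A \star B) = \sigma_n(A) \star \sigma_n(B)$ for $\star \in \{\land, \lor\}$, and the depth-shifting clause $\sigma_n(A \to B) = \sigma_{n+1}(A) \to \sigma_{n+1}(B)$. The effect is that $\sigma_n(A)$ replaces each atomic occurrence of $p$ at depth $k$ in $A$'s depth-annotated parsing tree by $\sigma(p, n+k)$.

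The heart of the argument is the closure lemma: if $\vdash_{\mathbf{hDR}} X$, then $\vdash_{\mathbf{hDR}} \sigma_n(X)$ for every depth-substitution $\sigma$ and every $n \in \mathbb{Z}$. The proof is by induction on the derivation. Each axiom schema transforms under $\sigma_n$ into another instance of itself, since the depth-shifting built into $\sigma_n$ mirrors the depth annotations in the axioms. Rules R1 and R3--R5 go through by invoking the induction hypothesis at appropriately chosen values of the parameter for each premise; R2 (modus ponens) is the paradigm case: to derive $\sigma_n(B)$ from $A \to B$ and $A$, apply the IH to $A \to B$ at index $n-1$ (yielding $\sigma_n(A) \to \sigma_n(B)$) and to $A$ at index $n$, then invoke R2 in $\mathbf{hDR}$.

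The main obstacle will be the disjunction-stacked rules R7--R9, in which the ``active'' metavariable appears at different depths in different premises---in R7, for instance, $A$ occurs at depth $0$ in $C \lor A$ but at depth $1$ in $C \lor (A \to B)$, so no single choice of index produces substituted premises that fit R7 directly. Overcoming this requires a more delicate argument that interleaves depth shifts with R7 itself (or, failing that, a fallback to a matrix-semantic argument using a depth-stratified Belnap matrix). This is where the bulk of the technical work will lie.

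Granting the lemma, Brady's theorem is immediate. Fix fresh atoms $\{p^{(k)} : p \in \At,\, k \in \mathbb{Z}\}$ and set $\sigma(p, k) = p^{(k)}$. Applying the lemma with $n = 0$ to the assumed theorem $A \to B$ yields $\sigma_0(A \to B) = \sigma_1(A) \to \sigma_1(B)$ as a theorem of $\mathbf{hDR}$, hence of $\mathbf{hR}$ by the containment noted at the end of Section~1. By Belnap's theorem, $\sigma_1(A)$ and $\sigma_1(B)$ share an atom $p^{(m)}$, which by construction witnesses $p$ occurring at depth $m-1$ in both $A$ and $B$---the desired depth-sharing.
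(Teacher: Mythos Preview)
Your primary approach---proving that $\mathbf{hDR}$ is closed under depth substitutions and then reducing to Belnap's theorem---has a genuine gap: the closure lemma is \emph{false} for $\mathbf{hDR}$. The paper itself flags this in the footnote to Theorem~\ref{th:deth_inv_gives_depth_rel}: $\mathbf{hDR}$ is a counterexample to the converse of that theorem, i.e.\ $\mathbf{hDR}$ has the depth-sharing property but is \emph{not} invariant under depth substitutions (with a reference to \cite{logan2023correction}). The obstacle you correctly identify in the disjunction-stacked rules (your R7--R9, the paper's R8--R10) is not a technicality to be overcome by a ``more delicate argument''; it is exactly where closure breaks. In $C\lor(A\to B)$ the $A$ sits at depth $1$ while in $C\lor A$ it sits at depth $0$, and no amount of re-indexing the premises will make a single depth substitution respect both simultaneously. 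The paper's depth-substitution route (Theorems~\ref{th:deth_inv_gives_depth_rel} and~\ref{thm:dr-isdepsubinv}) therefore establishes depth-sharing only for the strictly weaker $\mathbf{hDR}^-$, and the paper explicitly says so.

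As for the statement itself, the paper does not prove it at all---it simply cites Brady's original 1984 matrix argument. Your parenthetical fallback (``a matrix-semantic argument using a depth-stratified Belnap matrix'') is essentially that argument, so if you pursued it you would be reconstructing Brady's proof rather than offering an alternative. In short: the depth-substitution reduction you sketch is a clean idea, but it proves the theorem only for $\mathbf{hDR}^-$; for full $\mathbf{hDR}$ one is, as far as is currently known, stuck with the matrix method.
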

    For proofs of the former see \cite{belnap1960} or \cite{andersonbelnap1975}. For a proof of the latter see \cite{brady1984}. Both for sharing and depth-sharing, stronger results (still working only in $\LL^-$) have been proved, most notably in the work of Gemma Robles and Jose Mendez; see e.g.~\cite{robmen2014}.

    In the appendix, we prove a very mildly strengthened version of Theorem~\ref{th:belnap} that will be useful later in the paper.
    \begin{theorem}\label{th:strongervsp}
        If $X\Yright A$ is provable in $\R$, then $X$ and $A$ share a variable.
    \end{theorem}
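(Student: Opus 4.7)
The plan is to reduce Theorem~\ref{th:strongervsp} to Belnap's theorem via a uniform translation of bunches into formulas. Define $\tau:\bun\to\LL$ inductively by $\tau(A)=A$ for $A\in\LL$, $\tau((X,Y))=\tau(X)\land\tau(Y)$, and $\tau((X;Y))=\tau(X)\circ\tau(Y)$. The atoms appearing in $\tau(X)$ are precisely those appearing in $X$, so it suffices to show that if $X\Yright A$ is provable in $\R$, then $\tau(X)$ and $A$ share a variable.

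First I would show, by induction on the derivation of $X\Yright A$ in $\R$, that $\tau(X)\to A$ is a theorem of $\mathbf{hR}$. Each operational rule translates directly via residuation and the standard $\mathbf{hR}$-theorems for the corresponding connectives: for instance, $(\to\mathrm{I})$ becomes the residuation equivalence $\tau(X)\circ A\to B \;\Leftrightarrow\; \tau(X)\to(A\to B)$; $(\land\mathrm{I})$ and $(\circ\mathrm{I})$ become monotonicity of $\land$ and $\circ$; the negation rules translate using A8 and A9; and the elimination rules $(\land\mathrm{E})$, $(\circ\mathrm{E})$, $(\lor\mathrm{E})$, which operate inside a bunch context, require both monotonicity of $\land$ and $\circ$ and the distribution laws $A\circ(B\lor C)\to(A\circ B)\lor(A\circ C)$ and $A\land(B\lor C)\to(A\land B)\lor(A\land C)$, all available in $\mathbf{hR}$. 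The structural rules $\mathsf{B},\mathsf{C},\mathsf{W}$ translate to the associativity, commutativity, and contraction theorems for $\circ$ (all theorems of $\mathbf{hR}$, contraction coming from A13), while $\mathsf{eB},\mathsf{eC},\mathsf{eW}$ translate to the corresponding lattice laws for $\land$ and $\mathsf{eK}$ is handled by A2.

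Next I would apply Belnap's theorem to $\tau(X)\to A$ to conclude that $\tau(X)$ and $A$ share a variable, hence so do $X$ and $A$. Strictly speaking, Theorem~\ref{th:belnap} as quoted is restricted to the $\circ$-free fragment $\LL^-$ whereas $\tau(X)$ may contain $\circ$; however, the standard proof via the Belnap--Meyer four-element matrix $M_0$ extends uniformly to the full language, since $M_0$ carries a natural interpretation of fusion that validates exactly the $\circ$-axioms of $\mathbf{hR}$. The inductive construction of the falsifying valuation in the contrapositive argument then goes through unchanged.

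The main obstacle is really just the bookkeeping in the translation step, and specifically verifying that elimination rules with non-trivial bunch contexts behave correctly under $\tau$; this amounts to observing that bunch contexts translate to formula contexts built from $\land$ and $\circ$, both of which are monotone, and that $\lor$ distributes over each of these. Given the well-known equivalence of $\R$ and $\mathbf{hR}$ recorded in \cite{Read1988}, and the routine extension of Belnap's matrix argument to the full language, no deep new ingredient is required.
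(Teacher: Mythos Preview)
Your strategy---collapse the bunch $X$ to a formula and then invoke Belnap---is the same as the paper's, but you and the paper diverge at the crucial point. In this paper $\mathbf{hR}$ is axiomatized entirely in $\LL^-$: none of A1--A14 or R1--R10 mention $\circ$, so there simply is no residuation axiom and no ``$\circ$-axioms of $\mathbf{hR}$'' to appeal to. Your translation $\tau$ is exactly the characteristic formula $\cf$, and both $\cf(X)$ and $A$ may contain $\circ$; hence the claim ``$\tau(X)\to A\in\mathbf{hR}$'' is not even well-typed as things stand. You flag the language mismatch only for Belnap's theorem, but it already bites at the inductive step. The fix you gesture at---pass to an extension $\mathbf{hR}^\circ$ with residuation and then extend the $M_0$ matrix to interpret fusion---does work, but it is two genuine additions, not bookkeeping. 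The paper instead sidesteps the whole issue: its translation $\tau$ is $\cf$ \emph{followed by} the rewriting $A\circ B\mapsto\neg(\tau(A)\to\neg\tau(B))$, so that $\tau(X)$ and $\tau(A)$ live in $\LL^-$; one then proves $\tau(X)\to\tau(A)\in\mathbf{hR}$ by induction on the $\R$-derivation (using that $\tau(A)\dashv\vdash_\mathbf{R} A$ and a context-replacement lemma for the bunch-context elimination rules), and Theorem~\ref{th:belnap} applies verbatim. Your route is viable with the extra scaffolding, but the paper's route stays inside the systems it has actually defined.
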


    Belnap's result, our extension of Belnap's result, Brady's result, \textit{and} Robles and Mendez's extension of Brady's result all rely, essentially, on evaluating logics in cleverly-chosen matrices. In \cite{logan2022}, an alternative way of proving depth-sharing results was presented. We turn to this now, as it is the basis of the results in the remainder of the paper.

    \begin{mdef}\label{def:depth_sub}
        A \textit{depth substitution} is a function $d:\mathbb{Z}\times\At\to\LL$. Given a depth substitution $d$, we extend it to a function (which we also call $d$) $\mathbb{Z}\times\bun\to\bun$ as follows.
        \begin{itemize}
            \item $d^n(\neg A) =\neg d^n(A)$
            \item $d^n(A\lor B)=d^n(A)\lor d^n(B)$
            \item $d^n(A\land B)=d^n(A)\land d^n(B)$
            \item $d^n(A\to B)= d^{n+1}(A)\to d^{n+1}(B)$
            \item $d^n(A\circ B)=d^{n-1}(A)\circ d^n(B)$
            \item $d^n(X,Y)=d^n(X),d^n(Y)$
            \item $d^n(X;Y)=d^{n-1}(X);d^n(Y)$
        \end{itemize}
    \end{mdef}
    Note that depth substitutions \textit{really are} two-place functions. We've just located one of the arguments---the `$n$' one, intuitively---in a funny place; namely in a superscript. We do this for a simple reason: if we don't, then (as thinking about locutions like `$d(X,Y,n)$' makes clear) the comma ends up overloaded.
    
    \begin{mdef}
        A set of bunches $\Gamma$ is \textit{weakly invariant under depth substitutions} when for all depth substitutions $d$, $X\in \Gamma$ only if $d^0(X)\in \Gamma$ as well. $\Gamma$ is \textit{strongly invariant under depth substitutions} when for all depth substitutions $d$ and all $n\in\mathbb{Z}$, $X\in \Gamma$ only if $d^n(X)\in \Gamma$ as well.
    \end{mdef}

    \begin{mdef}\label{def:depth_shifted_function}
        Where $d$ is a depth substitution and $x\in\mathbb{Z}$, we define the depth substitution $d_x$ by saying $d_x^n(p)=d^{n+x}(p)$. 
    \end{mdef}

    \begin{lemma}\label{lem:depth_shift}
		For every bunch $X$, $d_x^n(X)=d^{n+x}(X)$.
	\end{lemma}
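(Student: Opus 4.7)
The plan is to prove this by structural induction on the bunch $X$, following the recursive clauses of Definition~\ref{def:depth_sub}. The whole claim is essentially that the recursive definition of $d^n$ interacts uniformly with the depth argument, so shifting $n$ by $x$ everywhere at the start of the recursion is the same as shifting the atomic-lookup index by $x$ at the end.

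For the base case, I would take $X = p \in \At$ and observe that $d_x^n(p) = d^{n+x}(p)$ by the very definition of $d_x$ in Definition~\ref{def:depth_shifted_function}. For the inductive cases I would simply run through the clauses of Definition~\ref{def:depth_sub}. The clauses for $\neg$, $\land$, $\lor$, and comma-bunching leave the depth index unchanged, so the induction hypothesis applies directly and yields, for instance, $d_x^n(A \land B) = d_x^n(A) \land d_x^n(B) = d^{n+x}(A) \land d^{n+x}(B) = d^{n+x}(A \land B)$. The interesting clauses are $\to$, $\circ$, and semicolon, where the recursive call bumps the index by $\pm 1$; here the computation is
\begin{align*}
d_x^n(A \to B) &= d_x^{n+1}(A) \to d_x^{n+1}(B) \\
&= d^{(n+1)+x}(A) \to d^{(n+1)+x}(B) \\
&= d^{(n+x)+1}(A) \to d^{(n+x)+1}(B) = d^{n+x}(A \to B),
\end{align*}
and analogously for $\circ$ and $;$ with a $-1$ shift on the appropriate component.

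There is really no obstacle here: the lemma is a straightforward bookkeeping fact whose entire content is that the definitional shifts $n \mapsto n \pm 1$ commute with the constant shift $n \mapsto n + x$, i.e. that $(n + x) \pm 1 = (n \pm 1) + x$. The only thing to watch is that every recursive clause in Definition~\ref{def:depth_sub} is handled, including both bunching operators; once that is done, the induction closes without further commentary.
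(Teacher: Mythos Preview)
Your proposal is correct and is exactly the straightforward induction on $X$ that the paper invokes; the paper's own proof consists of nothing more than the single sentence ``By a straightforward induction on $X$.'' Your write-up simply fills in the details the paper omits, and there is nothing to add or correct.
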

	\begin{proof}
		By a straightforward induction on $X$.
	\end{proof}

    \begin{cor}
        $\Gamma$ is weakly invariant under depth substitutions iff $\Gamma$ is strongly invariant under depth substitutions.
    \end{cor}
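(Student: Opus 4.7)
The plan is to prove the corollary as a quick consequence of Lemma~\ref{lem:depth_shift}, using the fact that the depth substitutions form a class closed under the $d\mapsto d_x$ operation.

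First I would dispense with the easy direction: strong invariance trivially implies weak invariance, since weak invariance is just the instance $n=0$ of the strong condition.

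For the harder direction, suppose $\Gamma$ is weakly invariant under depth substitutions. Fix an arbitrary depth substitution $d$, an arbitrary $n\in\mathbb{Z}$, and an $X\in\Gamma$; I want to show $d^n(X)\in\Gamma$. The key move is to pass to the shifted substitution $d_n$, which is itself a depth substitution (by Definition~\ref{def:depth_shifted_function}). Applying Lemma~\ref{lem:depth_shift} with $x=n$ at the base depth $0$ gives $d_n^0(X)=d^{0+n}(X)=d^n(X)$. Weak invariance applied to $d_n$ then yields $d_n^0(X)\in\Gamma$, so $d^n(X)\in\Gamma$, as required.

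There is no real obstacle here: the lemma does all of the heavy lifting, and the only conceptual point is recognizing that the depth substitutions are closed under the shift operation, so weak invariance automatically propagates to every depth via a shift. I would write this up in a few lines.
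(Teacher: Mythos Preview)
Your proposal is correct and is exactly the argument the paper has in mind: the corollary is stated immediately after Lemma~\ref{lem:depth_shift} with no written proof precisely because the shift $d\mapsto d_n$ together with that lemma gives $d_n^0(X)=d^n(X)$, reducing strong invariance to weak invariance.
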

    
    Given that strong and weak invariance are, per the above corollary, extensionally equivalent, the reader might wonder why the two properties are distinguished at all. The reason, which will make more sense later, is that for stronger syntactic criteria than mere depth, it is often the case that the corresponding distinction in fact marks a genuine difference.
    
    Where $X$ is a bunch and $Y$ is a subexpression of $X$, we write $X(\underline{Y})$ to mean $X$ with a particular occurrence of $Y$ highlighted in some way (perhaps, as is suggested by the notation, by underlining it). Below, we will have need of the following result, whose proof also serves as a nice warmup for some of the results in the next section.

    \begin{lemma}\label{lem:depsemicolon}
        Let $Y$ and $X$ be bunches and $d$ a depth substitution. If the depth of the highlighted occurrence of $X$ in $Y(\underline{X})$ is $c$, then for all $n$, $d^n(Y(\underline{X}))=d^n(Y)(\underline{d^{n+c}(X)})$. In words, the highlighted subbunch of $d^n(Y)$ that corresponds to the depth-$c$ highlighted occurrence of $X$ in $Y(\underline{X})$ is $d^{n+c}(X)$. 
    \end{lemma}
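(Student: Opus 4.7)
The plan is to prove this by induction on the structure of the bunch $Y$ (equivalently, on the depth-annotated parsing tree), letting the claim be universally quantified over $n$ and over the location of the highlighted subexpression.

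For the base case, the highlighted $X$ is $Y$ itself. Here the depth $c$ of the occurrence is $0$, and the claim reduces to $d^n(X) = d^{n+0}(X)$, which is immediate. For the inductive step, I would split into cases according to the outermost constructor of $Y$ and the side of that constructor on which the highlighted $X$ sits. In the symmetric cases ($Y = Y_1, Y_2$, $Y = A \lor B$, $Y = A \land B$, $Y = \neg A$), the parsing rules show that the depth of the highlighted occurrence inside the relevant immediate subexpression equals its depth $c$ in $Y$, the substitution clauses do not shift $n$ when passing into that subexpression, and so a direct application of the inductive hypothesis with the same $n$ finishes each case.

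The interesting cases are the depth-shifting ones, where the bookkeeping between Definition~\ref{def:depth_tree} and Definition~\ref{def:depth_sub} must be cross-checked. For $Y = Y_1 ; Y_2$ with the highlighted $X$ sitting in the left component $Y_1$, the parsing rule tells us the occurrence lives at depth $c+1$ inside $Y_1$ (viewed with its own root at depth $0$), while the substitution clause gives $d^n(Y_1;Y_2) = d^{n-1}(Y_1); d^n(Y_2)$. Applying the inductive hypothesis to $Y_1$ with the parameter $n-1$ yields $d^{n-1}(Y_1(\underline{X})) = d^{n-1}(Y_1)(\underline{d^{(n-1)+(c+1)}(X)}) = d^{n-1}(Y_1)(\underline{d^{n+c}(X)})$, which is exactly what we need; the right-hand case is even simpler, since both the depth and the parameter pass through unchanged. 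The cases $Y = A \to B$ and $Y = A \circ B$ are handled identically, matching the $+1$ or $-1$ shift in the parsing rule against the $+1$ or $-1$ shift in the substitution clause so that the sum $n + c$ stays invariant.

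The main obstacle is purely notational: keeping the two independent bookkeeping schemes — the depth annotation on the parsing tree and the parameter $n$ threaded through the substitution — aligned in the asymmetric clauses. Once one verifies for each constructor that the shift induced by the parsing rule and the shift induced by the substitution clause cancel so as to preserve $n + c$, the induction goes through uniformly.
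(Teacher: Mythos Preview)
Your proposal is correct and follows essentially the same approach as the paper: a structural induction on $Y$ in which the depth-shifting constructors ($;$, $\to$, $\circ$) are handled by checking that the shift in the parsing-tree depth and the shift in the substitution parameter cancel so that $n+c$ is preserved. The only cosmetic difference is that the paper organizes the argument as a nested induction (an outer induction on bunch constructors whose base case, $Y$ a formula, is handled by an inner induction on formula constructors), whereas you run a single induction over all constructors with base case $X=Y$; these come to the same thing.
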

    \begin{proof}
        By induction on the complexity of $Y$. In the base case, $Y$ is a formula and $X$ is a highlighted subformula of $Y$. For this case we need a separate induction. 
        
        In the base case of the internal induction, $Y$ is an atom, $X=Y$, and there's nothing to see. Of the remaining cases, the only interesting ones are the two conditional cases. Since they are essentially the same, we examine only one.
        
        So, suppose that $Y(\underline{X})=A(\underline{D})\to B$. Then $d^n(Y(\underline{X}))=d^{n+1}(A(\underline{D}))\to d^{n+1}(B)$. Note that since the depth of the highlighted occurrence of $D$ in $A(\underline{D})\to B$ is $c$, the depth of the highlighted occurrence of $D$ in $A(\underline{D})$ is $c-1$. So by IH, $d^{n+1}(A(\underline{D}))=d^{n+1}(A)(\underline{d^{n+(c-1)+1}(D)})=d^{n+1}(A)(\underline{d^{n+c}(D)})$. Thus $d^{n+1}(A(\underline{D}))\to d^{n+1}(B)=d^{n+1}(A)(\underline{d^{n+c}(D)})\to d^{n+1}(B)=d^n(A\to B)(\underline{d^{n+c}(D)})$ as required.
        
        We take this to suffice for the inner induction. For the outer induction, the only interesting cases are the semicolon cases. Again we deal with one and leave the other to the reader. 
        
        So, suppose $Y(\underline{X})=Y_1(\underline{X});Y_2$. Since the highlighted occurrence of $X$ is at depth $c$ in $Y$, it occurs at depth $c+1$ in $Y_1$. Thus by the inductive hypothesis, $d^{n-1}(Y_1(\underline{X})) =d^{n-1}(Y_1)(\underline{d^{n+c}(X)})$. Thus
        \begin{align*}
            d^n(Y(\underline{X})) &= d^{n-1}(Y_1(\underline{X}));d^n(Y_2) \\
            &= d^{n-1}(Y_1)(\underline{d^{n+c}(X)});d^n(Y_2) \\
            &= d^{n}(Y)(\underline{d^{n+c}(X)})
        \end{align*}
    \end{proof}

    One reason depth substitutions are interesting is the following pair of observations:
    \begin{theorem}[\cite{logan2022}]\label{th:deth_inv_gives_depth_rel}
        If $X$ is a set of formulas contained in the logic $\mathbf{hR}$ (formulated in $\LL^-$), $X$ is weakly invariant under depth substitutions, and $A\to B\in X$, then $A$ and $B$ depth-share a variable.
    \end{theorem}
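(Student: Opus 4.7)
The plan is a proof by contraposition. Suppose toward contradiction that $A$ and $B$ fail to depth-share any variable. We will construct a depth substitution $d$ for which $d^1(A)$ and $d^1(B)$ share no variable at all, and with $d$ also arranged to keep its image inside $\LL^-$. Then the $\to$-clause of Definition~\ref{def:depth_sub} yields $d^0(A\to B) = d^1(A)\to d^1(B)$; weak invariance places this formula in $X \subseteq \mathbf{hR}$; and Belnap's Theorem~\ref{th:belnap} is contradicted.

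For the construction, fix any injection $r\colon \mathbb{Z}\times\At\to\At$ and set $d^n(p) = r(n,p)$. Because each $d^n(p)$ is atomic, $d$ sends $\LL^-$-formulas to $\LL^-$-formulas. A routine induction on $A$---morally a specialization of Lemma~\ref{lem:depsemicolon} to the formula-only setting---shows that the atoms occurring in $d^1(A)$ are exactly $\{r(1+k,p) : p \text{ occurs at depth } k \text{ in } A\}$, and likewise for $B$. If $d^1(A)$ and $d^1(B)$ shared some atom $r(1+k,p) = r(1+k',p')$ with $p$ at depth $k$ in $A$ and $p'$ at depth $k'$ in $B$, then injectivity of $r$ would force $k = k'$ and $p = p'$, witnessing an atom $p$ at common depth $k$ in both $A$ and $B$---contrary to assumption. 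Hence $d^1(A)$ and $d^1(B)$ have disjoint variable sets, and Belnap's theorem delivers $d^1(A)\to d^1(B) \notin \mathbf{hR}$.

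The only genuinely delicate point is the depth bookkeeping. Applying $d^0$ to $A\to B$ triggers the $\to$-clause and bumps the depth index to $1$ before $d$ touches $A$ or $B$; this $+1$ shift is precisely what converts ``depth of an atom as a subformula of $A$'' into ``depth of an atom as a subformula of $A\to B$,'' so that the injective coloring $r$ faithfully detects depth-sharing between $A$ and $B$ rather than merely registering atoms at the roots. Beyond that observation, the argument is essentially a one-step reduction to Belnap's theorem.
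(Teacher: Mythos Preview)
Your proof is correct and follows essentially the same approach as the paper's proof sketch: pick an atomic injective depth substitution, use weak invariance to land $d^0(A\to B)$ in $\mathbf{hR}$, invoke Belnap's variable sharing, and then use injectivity to trace the shared atom back to a depth-shared variable in $A$ and $B$. You have simply filled in the details the paper leaves to the reader, including the explicit bookkeeping that the atoms of $d^1(A)$ are precisely the $r(1+k,p)$ for $p$ at depth $k$ in $A$.
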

    \begin{proof}[Proof Sketch]
        Choose a depth substitution $d$ that is atomic (so its range is a subset of the set of atomic formulas) and injective. Observe that if $A\to B\in X$, then $d^0(A\to B)\in X\subseteq\mathbf{hR}$. But $\mathbf{hR}$ is relevant, so the antecedent and consequent of $d^0(A\to B)$ share a variable. Now trace back through everything to see that it follows from this that $A$ and $B$ must share a variable at the same depth. For more details, see e.g. \cite{logan2022} or \cite{worley2024}.\footnote{It is important to note that the converse of Theorem~\ref{th:deth_inv_gives_depth_rel} is false. Indeed, $\mathbf{hDR}$ is a counterexample. See \cite{logan2023correction} for details.} 
    \end{proof}

    \begin{theorem}\label{thm:dr-isdepsubinv}
        $\mathbf{hDR^-}$ is closed under depth substitutions.
    \end{theorem}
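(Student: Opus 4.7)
The plan is to argue by induction on the length of a derivation in $\mathbf{hDR}^-$, establishing the strong form of invariance directly: for every theorem $A$ of $\mathbf{hDR}^-$, every depth substitution $d$, and every $n \in \mathbb{Z}$, $d^n(A) \in \mathbf{hDR}^-$. By the Corollary following Lemma~\ref{lem:depth_shift} this is equivalent to weak invariance, but the strong form is what the induction actually wants, since the rules that introduce conditionals force us to invoke the IH at a depth index different from the outer one.

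Base case: I would check axiom by axiom that each schema A1--A9 of $\mathbf{hB}$, together with the additional A8 of $\mathbf{hDR}^-$, is preserved under every $d^n$. The governing observation is that in every one of these schemas, all occurrences of any given metavariable sit at a common depth within the axiom. For example, in A4, every occurrence of $A$ sits at depth $+2$ below the root, and in the new axiom $(A\to\neg B)\to(B\to\neg A)$ both $A$'s and both $B$'s sit at depth $+2$. Unwinding the recursive clauses of Definition~\ref{def:depth_sub}, $d^n$ applied to any instance of such a schema yields another instance of the same schema, with the metavariable $A$ uniformly replaced by $d^{n+k}(A)$ for the common offset $k$ determined by the structure of the schema, and similarly for the other metavariables. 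So every axiom instance is mapped to an axiom instance.

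Inductive step: I would treat each rule R1--R5 in turn. R1 is immediate since $\land$ does not shift depth. For R2 (modus ponens), to derive $d^n(B)$ from the premises $A\to B$ and $A$, one applies the IH to $A\to B$ at index $n-1$ --- obtaining $d^n(A)\to d^n(B)\in\mathbf{hDR}^-$ --- and to $A$ at index $n$; one modus ponens then gives $d^n(B)$. R3 is direct because neither $\to$ nor $\neg$ forces the two sides to sit at different depth offsets. For R4 and R5, the conclusion is a conditional (or a negated conditional), so the atoms of the premises appear at depth $+1$ inside the conclusion; accordingly, one invokes the IH on the premises at index $n+1$ and applies the rule to those shifted instances. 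In each case the metavariables appear uniformly on both sides of the rule, so the bookkeeping closes.

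The only real risk is arithmetic bookkeeping in the rules that produce conditionals, but this resolves cleanly precisely because the axioms and rules of $\mathbf{hDR}^-$ are themselves ``depth-uniform'' in their metavariables. The theorem is, in effect, the verification that none of $\mathbf{hDR}^-$'s schemas places the same metavariable at two different depths --- if they did, an atomic injective $d$ could break the schema by forcing distinct substituends into a spot that the schema requires to be identical.
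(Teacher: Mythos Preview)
Your argument is correct. The paper itself does not supply a proof of this theorem: it appears in Section~2 (``Known Results'') as a statement imported from \cite{logan2022}, with no accompanying proof or proof sketch. So there is nothing in the paper to compare against directly.

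That said, the approach you outline---induction on the Hilbert derivation, checking that each axiom schema has all occurrences of each metavariable at a common depth and that each rule preserves theoremhood under $d^n$ (possibly after shifting the index on the premises)---is exactly the standard route and is the one taken in the cited source. Your handling of R2, R4, and R5, where the induction hypothesis must be invoked at $n-1$ or $n+1$ rather than $n$, is the crux, and you have it right. One small remark: your description of R3 as ``direct because neither $\to$ nor $\neg$ forces the two sides to sit at different depth offsets'' is slightly loose (the arrow does shift depth), but the intended claim---that the IH at the same index $n$ suffices---is correct, as one verifies by computing $d^n(A\to\neg B)=d^{n+1}(A)\to\neg d^{n+1}(B)$ and applying R3.
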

    Clearly these last two theorems together give the result of Theorem 5, albeit for a slightly weaker logic. The reader could be forgiven at this point for wondering what the fuss is all about: if all we get out of doing what we've done so far is a proof of a slightly weaker result, why do the work at all?

    Here's one reason: the proof we get by using the two theorems above is significantly easier and more elegant than the proofs of similar results found in \cite{brady1984} or \cite{logan2021strong}. Here's a second (and better) reason: the route we just gave generalizes rather profoundly, as we'll show in the second half of this paper.

    But there's another sense in which the question misses the whole point. Variable sharing results are neat. It's cool that we can prove them in this different way, and it's cool that this way of proving them is easier and more generalizable. But invariance results like those in Theorem~\ref{thm:dr-isdepsubinv} are fascinating in their own right. They are, we claim, quite striking---not to mention surprising!---results that seem to tell us something deep about those logics to which they apply. 

    \section{Natural Deduction}\label{sec:natdeduct}

    The astute reader will have observed that all of the results cited so far only apply to $\LL^-$. None of them make any mention of $\LL$, let alone $\bun$. It's time we turn to changing this. 

    We will need to introduce a strange sort of object in what follows, for the purpose of which we also need to define the \textit{arity} of a rule to be the total number of consecutions that occur in the rule. Thus, for example, (id) is a unary rule, ($\to$I) is a binary rule and ($\lor$E) is a 4-ary rule.
    \begin{mdef}
        A $\mathbf{B}$-pseudoderivation is a tree $T$ whose nodes are each labeled with either a consecution or a rule name from the set of rule names listed in Figure~\ref{fig:cons} that satisfies the following criteria:
        \begin{itemize}
            \item The root of $T$ is labeled with a consecution.
            \item Every consecution-labeled node is connected only to rule-labeled nodes
            \item Every $n$-ary rule-labeled node is connected to exactly $n$ other nodes, all of which are consecution-labeled.
        \end{itemize}
        We write $\Bpd$ for the set of $\mathbf{B}$-pseudoderivations.
    \end{mdef}
    Intuitively, a $\mathbf{B}$-pseudoderivation is a graph that, from a distance, \textit{looks like} a derivation, but which, on closer inspection, might contain inferential transitions that make no sense at all. We will think about the natural deduction system for $\mathbf{B}$ presented in the previous section in three different ways. Corresponding to these, we identify three natural proof-theoretic objects.

    \begin{mdef}
        $\nd{B}$ is the subset of $\Bpd$ consisting of trees that meet the following criteria:
        \begin{itemize}
            \item At least one rule-labeled node appears in the tree 
            \item Every rule-labeled node, together with the nodes it immediately connects to, forms an instance of the corresponding rule. 
        \end{itemize}
    \end{mdef}
    Members of $\nd{B}$ are called $\mathbf{B}$-derivations. Note that if $T\in\nd{B}$, then all of $T$'s leaf nodes are labeled with either a consecution or with (id). Call the former nodes the \textit{open} leaves of $T$.

    It's useful to see these definitions in action. Thus, while the tree consisting solely of the node containing $A\Yright A\lor B$ is in $\Bpd$, it's not in $\nd{B}$ because it breaks the first additional criterion listed, i.e. every tree in $\nd{B}$ contains at least one rule node. The example below similarly breaks the other criterion, since the instance of ($\lor$E) does not form an instance of the corresponding rule in $\B$.

    \begin{center}
        \centerline{\xymatrix@=3mm{
        A\Yright B\ar@{-}[rd]&  & C\Yright D\ar@{-}[ld] \\ 
        & (\lor{E})\ar@{-}[d] & \\ 
        & A;C\Yright B\circ D &
        }}
        
    \end{center}
    
    On the other hand, by replacing the instance of ($\lor$E) with ($\circ$I), we obtain a tree in $\nd{B}$ as well. Its root consists of the consecution $A;C\Yright B\circ D$ and its leaf nodes of the consecutions $A\Yright B$ and $C\Yright D$.

    With $\nd{B}$ on the table we can identify two other sets we will be interested in.
    \begin{mdef}
        $\ndd{B}$ is the set of all pairs $\langle\{X_i\Yright A_i\}_{i=1}^m\mid Y\Yright B\rangle$ for which there is a member of $\nd{B}$ rooted $Y\Yright B$ whose open leaves are labeled with some subset of $\{X_i\Yright A_i\}_{i=1}^m$. Members of $\ndd{B}$ are called $\mathbf{B}$-derivable rules.
    \end{mdef}

    \begin{mdef}
        $\mathbf{B}$ is the set of \textit{provable} consecutions, where $Y\Yright B$ is provable just if \mbox{$\langle\emptyset\mid Y\Yright B\rangle\in\ndd{B}$}.
    \end{mdef}

    \begin{mdef}\label{def:depth_tree_action}
        Given a depth substitution $d$, we extend it (again) to a function $\mathbb{Z}\times\nd{B}\to\Bpd$ as follows, where $R_1\in\{\to\text{E},\circ\text{I}\}$, $R_2\in\{\lor\text{I}_1,\lor\text{I}_2,\neg\text{E},\mathsf{eB},\mathsf{eC},\mathsf{eW},\mathsf{eK}\}$, $R_3\in\{\land\text{I},\neg\text{I}\}$, and $R_4\in\{\land\text{E},\circ\text{E},\mathsf{Cut}\}$ and the depth of the highlighted occurrence of $X$ in $Y(\underline{X})$ is $c$:\footnote{There's no natural way to extend this to a function $\mathbb{Z}\times\Bpd\to\Bpd$. The problem is the highlighting: in CUT, $\lor$E, and $\circ$E, the replacement effected by the rules can serve to highlight the occurrence of $X$ in question. But for arbitrary members of $\Bpd$, that won't be the case.}
        \begin{center}
            \begin{tabular}{c|c|c}
                Rule    &   T  & $d^n(T)$ \\\hline
                (id)    &
                \xymatrix@=3mm{\txt{(id)}\ar@{-}[d] \\ A\Yright A} &
                \xymatrix@=3mm{\txt{(id)}\ar@{-}[d] \\d^n(A)\Yright d^n(A)}
                \\ \hline
                ($\to$I)    &
                \xymatrix@=3mm{S\ar@{-}[d]\\ \txt{($\to$I)}\ar@{-}[d] \\ X\Yright A\to B} &
                \xymatrix@=3mm{d^{n+1}(S)\ar@{-}[d]\\ \txt{($\to$I)}\ar@{-}[d] \\ d^n(X)\Yright d^n(A\to B)}
                \\ \hline
                $R_1$ &
                \xymatrix@=3mm{S_1\ar@{-}[dr] & & S_2\ar@{-}[dl]\\ & \txt{($R_1$)}\ar@{-}[d] & \\ & X\Yright A & } & 
                \xymatrix@=3mm{d^{n-1}(S_1)\ar@{-}[dr] & & d^n(S_2)\ar@{-}[dl]\\ & \txt{($R_1$)}\ar@{-}[d] & \\ & d^n(X)\Yright d^n(A)&}
                \\ \hline
                $R_2$   &
                \xymatrix@=3mm{S\ar@{-}[d] \\ R_2\ar@{-}[d] \\X\Yright A} &
                \xymatrix@=3mm{d^n(S)\ar@{-}[d] \\ R_2\ar@{-}[d] \\d^n(X)\Yright d^n(A)}
                \\ \hline
                $\lor$E &
                \xymatrix@=3mm{S_1\ar@{-}[dr]&S_2\ar@{-}[d]&S_3\ar@{-}[dl]\\&\txt{$\lor$E}\ar@{-}[d]&\\&Y(X)\Yright A&} &
                \xymatrix@=3mm{d^{n+c}(S_1)\ar@{-}[dr]&d^n(S_2)\ar@{-}[d]&d^n(S_3)\ar@{-}[dl]\\&\txt{$\lor$E}\ar@{-}[d]&\\&d^n(Y(X))\Yright d^n(A)&}
                \\ \hline
                $R_3$ &
                \xymatrix@=3mm{S_1\ar@{-}[dr] & & S_2\ar@{-}[dl]\\ & \txt{($R_3$)}\ar@{-}[d] & \\ & X\Yright A & } & 
                \xymatrix@=3mm{d^{n}(S_1)\ar@{-}[dr] & & d^n(S_2)\ar@{-}[dl]\\ & \txt{($R_3$)}\ar@{-}[d] & \\ & d^n(X)\Yright d^n(A)&}
                \\ \hline
                $R_4$ &
                \xymatrix@=3mm{S_1\ar@{-}[dr] & & S_2\ar@{-}[dl]\\ & \txt{($R_4$)}\ar@{-}[d] & \\ & Y(X)\Yright A & } & 
                \xymatrix@=3mm{d^{n+c}(S_1)\ar@{-}[dr] & & d^n(S_2)\ar@{-}[dl]\\ & \txt{($R_4$)}\ar@{-}[d] & \\ & d^n(Y(X))\Yright d^n(A)&}
                \\ \hline
            \end{tabular}
        \end{center}
    \end{mdef}
    
    \begin{theorem}\label{thm:depth_inv}
        $\nd{B}$ is strongly invariant under depth substitutions: if $T\in\nd{B}$ is rooted at $X\Yright A$, $d$ is a depth substitution, and $n\in\mathbb{Z}$, then $d^n(T)\in\nd{B}$ as well and $d^n(T)$ is rooted at $d^n(X)\Yright d^n(A)$.
    \end{theorem}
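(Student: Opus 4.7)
The plan is to proceed by induction on the structure of $T$. The base case, when $T$ is the single-node (id) tree with root $A \Yright A$, is immediate: Definition~\ref{def:depth_tree_action} produces the (id) tree with root $d^n(A) \Yright d^n(A)$, which lies in $\nd{B}$. For the inductive step, $T$ has a rule $R$ at its root with subderivations $T_1, \ldots, T_k \in \nd{B}$; I would apply the IH to each $T_i$ at the shifted depth $m_i$ specified in Definition~\ref{def:depth_tree_action} (one of $n$, $n-1$, $n+1$, or $n+c$) and then verify that the resulting tree $d^n(T)$ is an instance of $R$ whose premises match the now-known roots of the $d^{m_i}(T_i)$.

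The rule-by-rule verification splits naturally. For the \textit{unshifted} rules where every subderivation receives $d^n$ (namely $\lor$I$_1$, $\lor$I$_2$, $\neg$E, $\land$I, $\neg$I, and the structural rules eB, eC, eW, eK), the check is immediate because $d^n$ commutes with negation, conjunction, disjunction, and the comma. For ($\to$I), ($\to$E), and ($\circ$I), there is genuine (if routine) bookkeeping: one uses the clauses $d^n(X;Y) = d^{n-1}(X); d^n(Y)$, $d^n(A \to B) = d^{n+1}(A) \to d^{n+1}(B)$, and $d^n(A \circ B) = d^{n-1}(A) \circ d^n(B)$ from Definition~\ref{def:depth_sub} to confirm that the depth shifts prescribed in Definition~\ref{def:depth_tree_action} deliver exactly the premises needed for an instance of the rule with the desired conclusion.

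The substantive cases are ($\lor$E), ($\land$E), ($\circ$E), and Cut---each has a side premise of the form $Y(A) \Yright C$, $Y(A,B) \Yright C$, or $Y(A;B) \Yright C$, while the principal premise mentions only the subbunch $X$ that is to replace the highlighted occurrence, located at depth $c$ inside $Y$. The key tool will be Lemma~\ref{lem:depsemicolon}, which tells me that $d^n(Y(\underline{Z})) = d^n(Y)(\underline{d^{n+c}(Z)})$ for any highlighted subbunch $Z$. Combined with the Definition~\ref{def:depth_tree_action} prescription that the principal subderivation be shifted by exactly $d^{n+c}$, this lemma guarantees that the substituted subbunch appearing inside the side premise is literally the same expression as the conclusion of the substituted principal premise, so the resulting tree really does display an instance of the rule. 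The main obstacle I anticipate is the ($\circ$E) case, where one must simultaneously track the lopsided depth shift in $\circ$ (dropping by one on the left) and in the semicolon (likewise dropping by one on the left); specifically, the side premise contains $A;B$ at depth $c$, which Lemma~\ref{lem:depsemicolon} rewrites to $d^{n+c-1}(A); d^{n+c}(B)$, and this must match the fusion $d^{n+c-1}(A) \circ d^{n+c}(B) = d^{n+c}(A \circ B)$ arising as the principal formula in the substituted principal premise---a calculation that works out, but only after these two sources of depth shift are carefully aligned.
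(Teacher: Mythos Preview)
Your proposal is correct and follows essentially the same route as the paper: induction on $T$, with the (id) base case and the depth-preserving rules handled immediately, the $(\to\mathrm{I})$/$(\to\mathrm{E})$/$(\circ\mathrm{I})$ cases handled by unwinding the defining clauses for $d^n$, and the context-sensitive rules $(\lor\mathrm{E})$, $(\land\mathrm{E})$, $(\circ\mathrm{E})$, Cut handled via Lemma~\ref{lem:depsemicolon}. Your anticipation that $(\circ\mathrm{E})$ is where the two lopsided shifts must be aligned is exactly the case the paper singles out for detailed treatment.
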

    \begin{proof}
        By induction on $T$. The (id) case is immediate, and IH immediately finishes the job in the $R_2$, $\lor$E, and $R_3$ cases. We examine one representative from each of the other rows in the above chart and leave the rest to the reader. Note that in each case, if a rule of the appropriate sort labels the last rule node in $T$, then $T$ has the form given in the `$T$' column of the chart. Also note that, by construction, the `rooted' part of the claim is true, so we focus only on the `is a derivation' part. Finally note that by IH, the various $d^i(S_j)$ are all in $\nd{B}$, so all that needs to be shown in each case is that the last rule-labeled-node, together with its neighbors, form an instance of the corresponding rule.
        
        Suppose $T$ has the form given in the ($\to$I)-row of the chart. Then $S$ is rooted at $X;A\Yright B$ and by IH, $d^{n+1}(S)\in\nd{B}$ and $d^{n+1}(S)$ is rooted at $d^{n+1}(X;A)\Yright d^{n+1}(B)$, which is just $d^n(X);d^{n+1}(A)\Yright d^{n+1}(B)$. Since $d^n(A\to B)=d^{n+1}(A)\to d^{n+1}(B)$, it follows that $T$'s last rule node, together with its neighbors, form an instance of the ($\to$I)-rule.
        
        Suppose $T$ has the form given in the ($\to$E)-row of the chart. Then $X$ is of the form $Y;Z$, $S_1$ is rooted at $Y\Yright B\to A$, $S_2$ is rooted at $Z\Yright B$, and $d^n(X)\Yright d^n(A)$ is just $d^{n-1}(Y);d^n(Z)\Yright d^n(A)$. By IH, $d^{n-1}(S_1)\in\nd{B}$ and is rooted at $d^{n-1}(Y)\Yright d^{n-1}(B\to A)$, which is just $d^{n-1}(Y)\Yright d^n(B)\to d^n(A)$. Also by IH, $d^n(S_2)\in\nd{B}$ and is rooted at $d^n(Z)\Yright d^n(B)$. So $T$'s last rule node, together with its neighbors, form an instance of the ($\to$E)-rule.

        Suppose ($\circ$E) labels the last rule node in $T$. Then there are $B$ and $C$ so that $S_1$ is rooted at $X\Yright B\circ C$ and $S_2$ is rooted at $Y(B;C)\Yright A$. By Lemma~\ref{lem:depsemicolon}, $d^n(Y(B;C))\Yright d^n(A)$ is the same as $d^n(Y)(d^{n+c-1}(B);d^{n+c}(C))\Yright d^n(A)$ for some $c\in\mathbb{Z}$. By IH, $d^{n+c}(S_1)\in\nd{B}$ and is rooted at $d^{n+c}(X)\Yright d^{n+c}(B\circ C)$, which is just $d^{n+c}(X)\Yright d^{n+c-1}(B)\circ d^{n+c}(C)$. So the last rule node, together with its neighbors, are in fact an instance of the ($\circ$E)-rule.
        
    \end{proof}

    We end this section by noting the following important corollaries.

    \begin{cor}\label{cor:dep_der_rules}
        Suppose $\langle\{X_i\Yright A_i\}_{i=1}^m\mid Y\Yright B\rangle\in\ndd{B}$, $d$ is a depth substitution, and $n\in\mathbb{Z}$. Then there are $n_i\in\mathbb{Z}$ so that $\langle\{d^{n_i}(X_i)\Yright d^{n_i}(A_i)\}_{i=1}^m\mid d^n(Y)\Yright d^n(B)\rangle$ is in $\ndd{B}$ as well. 
    \end{cor}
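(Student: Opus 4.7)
The plan is to use Theorem~\ref{thm:depth_inv} applied to a witnessing tree for the hypothesis, and then to read off the $n_i$ from the resulting tree. Unpacking the hypothesis, there is a $T \in \nd{B}$ rooted at $Y \Yright B$ whose open leaves form a subset of $\{X_i \Yright A_i\}_{i=1}^m$. Theorem~\ref{thm:depth_inv} immediately gives $d^n(T) \in \nd{B}$ rooted at $d^n(Y) \Yright d^n(B)$, and this tree will serve as the witness for the conclusion.

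What remains is to identify the $n_i$ so that the open leaves of $d^n(T)$ are a subset of $\{d^{n_i}(X_i) \Yright d^{n_i}(A_i)\}_{i=1}^m$. Inspection of Definition~\ref{def:depth_tree_action} shows that the action of $d^n$ on a tree passes a superscript down to each subderivation, shifted by $0$, $+1$, $-1$, or by some integer $c$ determined (via Lemma~\ref{lem:depsemicolon}) by the depth of a highlighted subbunch. Composing these shifts along the path from the root of $T$ down to an open leaf $\ell$ yields an integer $c_\ell$ such that the corresponding leaf of $d^n(T)$ is labeled $d^{n + c_\ell}(X_{i(\ell)}) \Yright d^{n + c_\ell}(A_{i(\ell)})$, where $i(\ell)$ identifies $\ell$'s label among the listed premises. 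Setting $n_{i(\ell)} := n + c_\ell$ for each open leaf, and $n_i := n$ for any unused index, supplies the required data.

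The verification is a routine induction on the structure of $T$, parallel to the proof of Theorem~\ref{thm:depth_inv}: the only thing to track is the cumulative shift, and each inductive case is handled by the corresponding entry in Definition~\ref{def:depth_tree_action}. The main---if mild---obstacle is the bookkeeping for rules with bunch-replacement ($\land$E, $\circ$E, $\mathsf{Cut}$, and $\lor$E), where the shift depends on the depth $c$ of the highlighted subbunch; Lemma~\ref{lem:depsemicolon} ensures that this $c$ behaves correctly under the substitution, so no genuinely new difficulty arises beyond what was already negotiated in Theorem~\ref{thm:depth_inv}.
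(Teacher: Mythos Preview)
Your proposal is correct and follows exactly the paper's approach: pick a witnessing derivation $T$, apply Theorem~\ref{thm:depth_inv} to obtain $d^n(T)$, and read off the required $n_i$ from the open leaves. The paper's proof is a single sentence that leaves the extraction of the $n_i$ implicit, whereas you spell out the cumulative-shift bookkeeping along root-to-leaf paths; this extra detail is sound and in the same spirit as the paper.
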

    \begin{proof}
        Choose $T\in\nd{B}$ witnessing the derivability of $\langle\{X_i\Yright A_i\}_{i=1}^m\mid Y\Yright B\rangle$. Then by theorem~\ref{thm:depth_inv}, $d^n(T)$ witnesses the derivability of $\langle\{d^{n_i}(X_i)\Yright d^{n_i}(A_i)\}_{i=1}^m\mid d^n(Y)\Yright d^n(B)\rangle$ for some collection of $n_i\in\mathbb{Z}$.
    \end{proof}

    \begin{cor}\label{cor:dep_simple_inv}
        If $X\Yright A\in\mathbf{B}$, $d$ is a depth substitution, and $n\in\mathbb{Z}$, then $d^n(X)\Yright d^n(A)$ is in $\mathbf{B}$ as well. 
    \end{cor}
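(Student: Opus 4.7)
The plan is to derive this as an essentially immediate consequence of the preceding Corollary~\ref{cor:dep_der_rules}. By definition, $X\Yright A\in\mathbf{B}$ means exactly that $\langle\emptyset\mid X\Yright A\rangle\in\ndd{B}$, so the hypothesis gives us a derivable rule with an empty collection of open-leaf consecutions.

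First I would invoke Corollary~\ref{cor:dep_der_rules} with $m=0$ (that is, with the indexed family $\{X_i\Yright A_i\}_{i=1}^m$ taken to be empty). The corollary then produces some family of integers $n_i$ indexing the images of the open leaves; but since there are no open leaves, this family is vacuous, and the conclusion reduces to $\langle\emptyset\mid d^n(Y)\Yright d^n(B)\rangle\in\ndd{B}$ in our notation, i.e.\ $\langle\emptyset\mid d^n(X)\Yright d^n(A)\rangle\in\ndd{B}$.

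Unpacking the definition of $\mathbf{B}$ once more, this is exactly the assertion that $d^n(X)\Yright d^n(A)\in\mathbf{B}$, which is what we needed. There is no real obstacle here; the genuine technical content has already been absorbed into Theorem~\ref{thm:depth_inv} and its Corollary~\ref{cor:dep_der_rules}, and the present statement is just the specialization to the case of a \emph{closed} derivation, where the worry about the shifted depths $n_i$ attached to open leaves disappears vacuously. If desired, one could alternatively give a direct proof by picking $T\in\nd{B}$ with root $X\Yright A$ and no open leaves, applying Theorem~\ref{thm:depth_inv} to obtain $d^n(T)\in\nd{B}$ rooted at $d^n(X)\Yright d^n(A)$ (and again with no open leaves, since $d^n$ preserves the tree shape and hence the set of leaves), and reading off the conclusion; but routing through Corollary~\ref{cor:dep_der_rules} is the cleaner option.
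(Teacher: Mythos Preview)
Your proposal is correct and matches the paper's approach: the paper states this corollary without proof, treating it as immediate from the preceding Corollary~\ref{cor:dep_der_rules} (or directly from Theorem~\ref{thm:depth_inv}), and your specialization to $m=0$ is exactly the intended reading.
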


    \begin{cor}\label{cor:dep_share}
        If $X\Yright A\in\mathbf{B}$, then $X$ and $A$ depth-share a variable.
    \end{cor}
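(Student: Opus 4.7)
The plan is to reduce the depth-sharing claim to the ordinary variable-sharing property for $\R$ (Theorem~\ref{th:strongervsp}) by applying a cleverly chosen depth substitution and then ``decoding'' the resulting shared atom.

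First, I would fix a depth substitution $d$ whose range lies in $\At$ and which is jointly injective: $d^n(p) = d^m(q)$ implies $n=m$ and $p=q$. Concretely, pick any bijection $\langle \cdot, \cdot \rangle : \mathbb{Z} \times \mathbb{N} \to \mathbb{N}$ and set $d^n(p_i) = p_{\langle n, i\rangle}$. Now suppose $X \Yright A \in \mathbf{B}$. By Corollary~\ref{cor:dep_simple_inv}, $d^0(X) \Yright d^0(A) \in \mathbf{B}$, and since $\mathbf{B} \subseteq \R$ this consecution is provable in $\R$. Theorem~\ref{th:strongervsp} then yields an atom $q$ that is a subexpression of both $d^0(X)$ and $d^0(A)$.

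The remaining step is a bookkeeping lemma: for any bunch $Y$, an atom $q$ is a subexpression of $d^0(Y)$ if and only if $q = d^k(p)$ for some atom $p$ occurring at depth $k$ in $Y$. This is the atomic instance of the tracking phenomenon recorded in Lemma~\ref{lem:depsemicolon}: if $p$ is a (highlighted) subexpression of $Y$ at depth $k$, then by Lemma~\ref{lem:depsemicolon} $d^0(Y) = d^0(Y)(\underline{d^k(p)})$, so $d^k(p)$ appears in $d^0(Y)$; conversely, a straightforward induction on $Y$ shows every atomic subexpression of $d^0(Y)$ arises this way, since $d$ only ever replaces atoms by atoms. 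Applying this to $X$ and $A$ with the shared atom $q$, we obtain atoms $p_X, p_A$ and integers $k_X, k_A$ such that $p_X$ occurs at depth $k_X$ in $X$, $p_A$ occurs at depth $k_A$ in $A$, and $d^{k_X}(p_X) = q = d^{k_A}(p_A)$. The injectivity of $d$ forces $p_X = p_A$ and $k_X = k_A$, which is exactly depth-sharing.

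There is no real obstacle here; the heavy lifting has already been done in Theorem~\ref{thm:depth_inv} (and its corollary) and in Theorem~\ref{th:strongervsp}. The only mild care needed is in setting up the substitution $d$ with the right injectivity property and in the routine induction identifying the atomic subexpressions of $d^0(Y)$ with the depth-tagged atoms of $Y$.
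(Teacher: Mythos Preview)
Your proposal is correct and follows essentially the same route as the paper: the paper's proof simply reads ``Exactly as in Theorem~\ref{th:deth_inv_gives_depth_rel}'', which amounts to picking an atomic injective depth substitution, applying the invariance corollary, invoking variable sharing in $\R$ (here via Theorem~\ref{th:strongervsp}, since bunches are involved), and tracing back through injectivity. You have spelled out the bookkeeping step more explicitly than the paper does, but the strategy is identical.
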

    \begin{proof}
        Exactly as in Theorem~\ref{th:deth_inv_gives_depth_rel}
    \end{proof}

\section{Broader Classes of Substitutions}

    Let's pause to reflect. We began by reciting old results (Theorems~\ref{th:belnap},~\ref{th:deth_inv_gives_depth_rel}, and~\ref{thm:dr-isdepsubinv}). We then extended them: where prior results concerned logics qua sets of sentences, we've now proved analogous results for logics qua sets of derivations in bunched natural deduction systems. But there is another axis of expansion that has been explored; that of tracking more fine-grained syntactic criteria than mere depth. The first such results in this vein were examined in \cite{logan2021strong}. A better extension was given in~\cite{ferguson2023topic}. Here we take the already-extended result just proved and extend it along this other axis in novel ways as well. To begin, let $\seq$ be the set of sequences in $\{l,r,\lambda,\rho,n\}$, and let $\varepsilon$ be the empty sequence. We will use members of $\seq$ in the results below in much the way we used integers in the results above. \textit{However}, for this to work, we must also enforce some reduction rules. To begin, we define \textit{immediate reduction} (written $\squigto'$) to be the relation generated by the following:
    \begin{multicols}{2}
        \begin{enumerate}
            \item\label{reduct:normal_START} $\overline{x}l\lambda\overline{y}\rightsquigarrow'\overline{x}\rho\overline{y}$
            \item $\overline{x}r\lambda\overline{y}\rightsquigarrow'\overline{xy}$
            \item $\overline{x}\lambda r\overline{y}\rightsquigarrow'\overline{xy}$
            \item\label{reduct:normal_END} $\overline{x}\rho r\overline{y}\rightsquigarrow'\overline{x}l\overline{y}$
            \item\label{reduct:dne} $\overline{x}nn\overline{y}\rightsquigarrow'\overline{xy}$ 
        \end{enumerate}
    \end{multicols}
    Informally, these state that occurrences of $l\lambda$ are to be replaced by occurrences of $\rho$, occurrences of $\rho r$ are to be replaced by occurrences of $l$, and occurrences of $r\lambda$, $\lambda r$, and $nn$ are to be removed entirely. Say that a sequence is \textit{reduced} if none of the reduction rules apply to it, and write $\rseq$ for the set of reduced sequences. Finally, say that $\overline{x}$ \textit{reduces to} $\overline{y}$---and write $\overline{x}\squigto\overline{y}$---just if there is a sequence of sequences $\overline{z_1},\dots,\overline{z_n}$ with $n\geq 0$ so that $\overline{x}\squigto'\overline{z_1}\squigto'\dots\squigto'\overline{z_n}\squigto'\overline{y}$.
    
    The following result is proved in Appendix B:
    \begin{theorem}\label{th:red_unique}
        Reductions are unique: if $\overline{x}\squigto\overline{y}\in\rseq$ and $\overline{x}\squigto\overline{z}\in\rseq$, then $\overline{y}=\overline{z}$. 
    \end{theorem}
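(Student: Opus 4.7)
The plan is to prove Theorem~\ref{th:red_unique} by applying Newman's lemma, so that uniqueness of the reduced form follows once one establishes strong normalization and local confluence of the immediate reduction relation $\squigto'$.

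Strong normalization is immediate. Inspecting the five rules, every immediate reduction strictly decreases the length of the sequence: rules \ref{reduct:normal_START} and \ref{reduct:normal_END} replace a two-symbol pattern with a single symbol, while the other three rules delete the pattern outright. Hence no infinite chain of $\squigto'$-steps is possible, and every $\overline{x}\in\seq$ has at least one normal form in $\rseq$.

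For local confluence, suppose $\overline{x}\squigto'\overline{y_1}$ and $\overline{x}\squigto'\overline{y_2}$ are two immediate reductions contracting redexes at positions $p_1$ and $p_2$. If $p_1$ and $p_2$ occupy disjoint stretches of $\overline{x}$, then performing the other redex in each of $\overline{y_1},\overline{y_2}$ yields the same sequence and we are done. Otherwise the two redexes overlap. Since every rule's left-hand side has length two and no two distinct left-hand sides share a first symbol, the only possible overlap is when the last symbol of one redex coincides with the first symbol of the other. A case split by this shared symbol ($\lambda$, $r$, or $n$) yields exactly five critical pairs: $l\lambda r$, $r\lambda r$, $\lambda r\lambda$, $\rho r\lambda$, and $nnn$. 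These are then checked by hand. For instance, $l\lambda r$ either reduces via rule \ref{reduct:normal_START} to $\rho r$ and thence via rule \ref{reduct:normal_END} to $l$, or reduces via rule 3 directly to $l$; similarly $\rho r\lambda$ either reduces via rule \ref{reduct:normal_END} to $l\lambda$ and thence via rule \ref{reduct:normal_START} to $\rho$, or reduces via rule 2 directly to $\rho$. The remaining three pairs collapse in a single step on either side.

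With strong normalization and local confluence established, Newman's lemma delivers full confluence of $\squigto'$; hence any two reductions of $\overline{x}$ to elements of $\rseq$ must agree, which is the statement of the theorem. The only delicate point is ensuring the critical-pair enumeration is exhaustive, but this is a mechanical consequence of reading off the first and last symbols of the five left-hand sides, and the verification of each pair is routine.
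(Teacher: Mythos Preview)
Your proof is correct and shares the core critical-pair analysis with the paper's own argument, but the packaging differs. The paper establishes something slightly stronger than local confluence---essentially the diamond property for the reflexive closure of $\squigto'$, showing that whenever $\overline{x}\squigto'\overline{w_1}$ and $\overline{x}\squigto'\overline{w_2}$, the two results rejoin in at most one further step each---and then carries out an explicit tiling induction on the combined length $n+m$ of the two reduction sequences (following the Church--Rosser pattern from Selinger's notes). You instead observe that every rule strictly shortens the sequence, giving strong normalization, and then invoke Newman's lemma to upgrade local confluence to full confluence. Your route is more modular and avoids the hand-rolled tiling argument; the paper's route is self-contained and does not rely on an external lemma. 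The enumeration and verification of the five critical pairs $l\lambda r$, $r\lambda r$, $\lambda r\lambda$, $\rho r\lambda$, $nnn$ is identical in both.
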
    
    Given this, it makes sense to write $\red(\overline{z})$ for the unique reduced sequence that $\overline{z}$ reduces to. We will need the following results about $\red$ and about $\rseq$; their proofs are also found in Appendix B. 

    \begin{cor}\label{cor:redred}        
        $\red(\overline{zw})=\red(\red(\overline{z})\red(\overline{w}))$
    \end{cor}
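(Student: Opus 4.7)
The plan is to leverage Theorem~\ref{th:red_unique}: since reductions to reduced forms are unique, it suffices to show that $\overline{zw}$ reduces to $\red(\red(\overline{z})\red(\overline{w}))$, because $\overline{zw}$ also reduces to $\red(\overline{zw})$ by definition, and both targets are in $\rseq$.

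To show $\overline{zw} \squigto \red(\red(\overline{z})\red(\overline{w}))$, I would first establish the following easy ``context lemma'': if $\overline{u} \squigto' \overline{u'}$, then for any sequences $\overline{a},\overline{b}$, also $\overline{a}\overline{u}\overline{b} \squigto' \overline{a}\overline{u'}\overline{b}$. This is immediate from the shape of rules (1)--(5): each rule already quantifies over arbitrary prefixes $\overline{x}$ and suffixes $\overline{y}$, so enlarging the context by concatenating $\overline{a}$ on the left and $\overline{b}$ on the right just amounts to reading the rule with $\overline{a}\overline{x}$ and $\overline{y}\overline{b}$ in place of $\overline{x}$ and $\overline{y}$. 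Iterating, $\overline{u} \squigto \overline{u'}$ entails $\overline{a}\overline{u}\overline{b} \squigto \overline{a}\overline{u'}\overline{b}$.

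Applying this with $\overline{a}=\varepsilon$, $\overline{b}=\overline{w}$, and the reduction sequence witnessing $\overline{z} \squigto \red(\overline{z})$, I obtain $\overline{zw} \squigto \red(\overline{z})\overline{w}$. Applying it again with $\overline{a}=\red(\overline{z})$, $\overline{b}=\varepsilon$, and the reduction sequence witnessing $\overline{w} \squigto \red(\overline{w})$, I obtain $\red(\overline{z})\overline{w} \squigto \red(\overline{z})\red(\overline{w})$. Finally, $\red(\overline{z})\red(\overline{w}) \squigto \red(\red(\overline{z})\red(\overline{w}))$ by definition. Stringing these together,
\[
    \overline{zw} \squigto \red(\red(\overline{z})\red(\overline{w})) \in \rseq.
\]
Since also $\overline{zw} \squigto \red(\overline{zw}) \in \rseq$, Theorem~\ref{th:red_unique} yields the equality $\red(\overline{zw}) = \red(\red(\overline{z})\red(\overline{w}))$.

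The only substantive step is the context lemma, and even there the heavy lifting was already done when the reduction rules were stated schematically with arbitrary prefixes and suffixes. The real engine is Theorem~\ref{th:red_unique}; without confluence/uniqueness, the fact that we arrived at a reduced form by a particular reduction strategy (reducing the left half fully, then the right half) would not suffice to identify it with $\red(\overline{zw})$, because boundary-straddling patterns (such as a trailing $l$ in $\red(\overline{z})$ meeting a leading $\lambda$ in $\red(\overline{w})$) can still appear and need further reduction. Uniqueness is precisely what lets us not worry about in what order those are handled.
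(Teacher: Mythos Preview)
Your proof is correct and is exactly the intended argument: the paper labels this a corollary of Theorem~\ref{th:red_unique} and in fact omits the proof entirely from Appendix~B, so your derivation via the context lemma and uniqueness of normal forms is precisely the implicit reasoning behind the ``Corollary'' label. The one nitpick is that the paper's definition of $\squigto$ is literally the transitive (not reflexive-transitive) closure of $\squigto'$, so the edge case where $\overline{zw}$ is already reduced needs a one-line separate treatment; but this is a triviality (and arguably an oversight in the paper's definition rather than in your argument).
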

    
    \begin{theorem}\label{th:canceling}
        If $\overline{x}$, $\overline{y}$, and $\overline{w}$ are all in $\rseq$, then $\red(\overline{xw})=\red(\overline{yw})$ iff $\overline{x}=\overline{y}$. 
    \end{theorem}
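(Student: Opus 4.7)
The \emph{if} direction is immediate. For the \emph{only if} direction, my plan is to induct on the length of $\overline{w}$. The base case $|\overline{w}|=0$ gives $\overline{x} = \red(\overline{x}) = \red(\overline{y}) = \overline{y}$ since both are already reduced. For the inductive step I peel off the last symbol: write $\overline{w} = \overline{w'}a$. Any prefix of a reduced sequence is itself reduced, so $\overline{w'} \in \rseq$. Applying Corollary~\ref{cor:redred} (with the trivial fact that $\red(a)=a$ for a single-symbol sequence) converts the hypothesis into $\red(\red(\overline{xw'})\,a) = \red(\red(\overline{yw'})\,a)$. If I can show that for each single symbol $a$ the map $f_a:\rseq\to\rseq$ given by $f_a(\overline{z}) = \red(\overline{z}a)$ is injective, this equation yields $\red(\overline{xw'}) = \red(\overline{yw'})$, and applying the inductive hypothesis to the shorter reduced sequence $\overline{w'}$ gives $\overline{x}=\overline{y}$.

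So the crux is establishing injectivity of $f_a$ for each $a \in \{l,r,\lambda,\rho,n\}$, and my plan there is a finite case analysis on the final character of the input. The key observation is that appending $a$ to a reduced sequence $\overline{z}$ creates at most one new adjacent pair---the one straddling the boundary---so at most one immediate reduction applies, and I can therefore describe $f_a(\overline{z})$ by a short case split. For $a \in \{l, \rho\}$, no pair in rules~(\ref{reduct:normal_START})--(\ref{reduct:dne}) ends in $a$, so $f_a$ is plain right-concatenation and is trivially injective. For $a \in \{r, \lambda, n\}$, I split on which (at most two) final characters of $\overline{z}$ trigger a boundary reduction and compute $f_a$ explicitly in each sub-case. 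For example, for $a = r$: if $\overline{z} = \overline{z'}\lambda$ then $f_r(\overline{z}) = \overline{z'}$; if $\overline{z} = \overline{z'}\rho$ then $f_r(\overline{z}) = \overline{z'}l$; otherwise $f_r(\overline{z}) = \overline{z}r$. Injectivity is then checked by comparing every pair of sub-cases: same-case matchings are straightforward cancellations, and in each \emph{cross} case the alleged equality forces the input to have ended in a forbidden pair, contradicting that it was reduced (e.g.\ $f_r(\overline{z_1'}\lambda) = f_r(\overline{z_2})$ with $\overline{z_2}$ ending in neither $\lambda$ nor $\rho$ would give $\overline{z_1'} = \overline{z_2}r$, so that $\overline{z_1'}\lambda$ ends in $r\lambda$).

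The main obstacle is purely bookkeeping: five possible values of $a$, each with up to five relevant possibilities for the last character of the input, yield a fair number of sub-cases to verify. But no step is genuinely subtle---the rewrite system is small, every cross case falls by inspection into a contradiction with one of the five forbidden pairs, and the overall argument amounts to verifying right-cancellation on the normal forms of the system.
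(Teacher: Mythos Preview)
Your argument is correct but proceeds by a genuinely different decomposition from the paper's. The paper inducts on $|\overline{x}|+|\overline{y}|$ and works at the \emph{front} of $\overline{w}$: it splits into cases on the triple $(w_1,x_n,y_m)$, performs the (at most one) boundary reduction on each side, and then applies the inductive hypothesis with a shorter $\overline{x}$ or $\overline{y}$ and the suffix $w_2\dots w_k$ as the new $\overline{w}$; the remaining twelve cross-cases are each shown to force a forbidden pair in $\overline{x}$ or $\overline{y}$. You instead induct on $|\overline{w}|$ and work at the \emph{back} of $\overline{w}$, factoring the whole argument through the standalone lemma that each single-symbol map $f_a(\overline{z})=\red(\overline{z}a)$ is injective on $\rseq$. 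What your approach buys is modularity and a smaller case split (two of the five symbols are trivial, and the remaining three each need only a two- or three-way branch on the last character of $\overline{z}$), whereas the paper's triple case analysis is heavier but keeps all the work in a single induction without the intermediate appeal to Corollary~\ref{cor:redred}. Both routes ultimately bottom out in the same kind of ``the resulting equality forces a forbidden adjacent pair'' contradictions.
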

       
    \begin{cor}\label{cor:replace}
        If $\red(\overline{z_1w})=\red(\overline{z_2w})$, then for all $\overline{y}$, $\red(\overline{z_1y})=\red(\overline{z_2y})$. 
    \end{cor}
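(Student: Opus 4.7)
The plan is to cascade Corollary~\ref{cor:redred} and Theorem~\ref{th:canceling}: the former lets us replace arbitrary sequences with their reduced forms in both positions of a concatenation inside $\red$, and the latter lets us cancel a common reduced suffix. So from the hypothesis I extract the equation $\red(\overline{z_1}) = \red(\overline{z_2})$, which is a ``suffix-free'' fact; then plugging in any $\overline{y}$ on the right recovers what we want.

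In more detail, first I would apply Corollary~\ref{cor:redred} to both sides of the hypothesis to obtain
\[
\red(\red(\overline{z_1})\,\red(\overline{w})) \;=\; \red(\overline{z_1w}) \;=\; \red(\overline{z_2w}) \;=\; \red(\red(\overline{z_2})\,\red(\overline{w})).
\]
Since $\red(\overline{z_1})$, $\red(\overline{z_2})$, and $\red(\overline{w})$ are all members of $\rseq$ by construction, Theorem~\ref{th:canceling} applies and yields $\red(\overline{z_1}) = \red(\overline{z_2})$.

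Now fix an arbitrary $\overline{y}$. Applying Corollary~\ref{cor:redred} again, together with the equation just derived,
\[
\red(\overline{z_1y}) \;=\; \red(\red(\overline{z_1})\,\red(\overline{y})) \;=\; \red(\red(\overline{z_2})\,\red(\overline{y})) \;=\; \red(\overline{z_2y}),
\]
which is the desired conclusion. No case analysis or induction is needed beyond what is already buried in the two prior results, so the only possible obstacle is a notational one: making sure that $\red$ is well-defined on all of $\seq$ (guaranteed by Theorem~\ref{th:red_unique}) so that $\red(\overline{z_i})$ and $\red(\overline{w})$ make sense as elements of $\rseq$ before we invoke Theorem~\ref{th:canceling}. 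That bookkeeping aside, the result is essentially an immediate consequence of the two preceding lemmas.
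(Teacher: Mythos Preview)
Your proposal is correct and follows essentially the same route as the paper: apply Corollary~\ref{cor:redred} to rewrite both sides in terms of reduced factors, cancel the common reduced suffix via Theorem~\ref{th:canceling} to obtain $\red(\overline{z_1})=\red(\overline{z_2})$, and then reapply Corollary~\ref{cor:redred} with the new suffix $\overline{y}$. Your explicit remark that $\red(\overline{z_i})$ and $\red(\overline{w})$ lie in $\rseq$ is a welcome clarification of why Theorem~\ref{th:canceling} applies, but otherwise the argument is the same.
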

        
    To put these sequences to work, we first need a way to attach them to subformulas.
    \begin{mdef}[Analogue of Definition~\ref{def:depth_tree}]\label{def:seq_tree}
        Where $X$ is a bunch, an \textit{$\rseq$-annotated parsing tree for $X$} is a tree whose root is
        \begin{displaymath}
            \underbrace{X}_\varepsilon
        \end{displaymath}   
        and which is closed under the following extension rules, where $\overline{x}\in\rseq$.
        \begin{align*}
            \xymatrix@C=0mm@R=4mm{
                & \underbrace{X;Y}_{\overline{x}}\ar[dr]\ar[dl] & \\
                \underbrace{X}_{\red(\lambda\overline{x})} & & \underbrace{Y}_{\red(\rho\overline{x})}
            }
            \qquad
            \xymatrix@C=0mm@R=4mm{
                & \underbrace{X,Y}_{\overline{x}}\ar[dr]\ar[dl] & \\
                \underbrace{X}_{\overline{x}} & & \underbrace{Y}_{\overline{x}}
            }
            \qquad            
            \xymatrix@C=0mm@R=4mm{
                & \underbrace{A\land B}_{\overline{x}}\ar[dr]\ar[dl] & \\
                \underbrace{A}_{\overline{x}} & & \underbrace{B}_{\overline{x}}
            }
            \\
            \xymatrix@C=0mm@R=4mm{
                \underbrace{\neg A}_{\overline{x}}\ar[d]\\
                \underbrace{A}_{\red(n\overline{x})}
            }
            \qquad
            \xymatrix@C=0mm@R=4mm{
                & \underbrace{A\lor B}_{\overline{x}}\ar[dr]\ar[dl] & \\
                \underbrace{A}_{\overline{x}} & & \underbrace{B}_{\overline{x}}
            }
            \qquad
            \xymatrix@C=0mm@R=4mm{
                & \underbrace{A\to B}_{\overline{x}}\ar[dr]\ar[dl] & \\
                \underbrace{A}_{\red(l\overline{x})} & & \underbrace{B}_{\red(r\overline{x})}
            }
            \qquad
            \xymatrix@C=0mm@R=4mm{
                & \underbrace{A\circ B}_{\overline{x}}\ar[dr]\ar[dl] & \\
                \underbrace{A}_{\red(\lambda\overline{x})} & & \underbrace{B}_{\red(\rho\overline{x})}
            }
        \end{align*}
    \end{mdef}

    \begin{mdef}[Analogue of Definition~\ref{def:share}]
        Bunches $X$ and $Y$ $\rseq$-share a variable if some $p\in\At$ occurs under the same sequence $\overline{x}$ in both $X$ and $Y$.
    \end{mdef}

    \begin{mdef}\label{def:fallsunder}
        If $\underbrace{Y}_{\overline{x}}$ occurs in the $\rseq$-annotated parsing tree for $X$, then we say this occurrence of $Y$ in $X$ falls under the sequence $\overline{x}$ or, equivalently, that $\overline{x}$ is the sequence under which that occurrence of $Y$ in $X$ falls. 
    \end{mdef}
    
    \begin{mdef}[Analogue of Definition~\ref{def:depth_sub}]
        An $\rseq$-substitution is a function $\rseq\times\At\to\LL$. We extend any such to a function $\rseq\times\bun\to\bun$ as follows:
        \begin{itemize}
            \item $\sigma^{\overline{x}}(A\land B)=\sigma^{\overline{x}}(A)\land \sigma^{\overline{x}}(B)$
            \item $\sigma^{\overline{x}}(A\lor B)=\sigma^{\overline{x}}(A)\lor \sigma^{\overline{x}}(B)$
            \item $\sigma^{\overline{x}}(\neg A)=\neg\sigma^{\red(n\overline{x})}(A)$
            \item $\sigma^{\overline{x}}(A\to B)=\sigma^{\red(l\overline{x})}(A)\to \sigma^{\red(r\overline{x})}(B)$
            \item $\sigma^{\overline{x}}(A\circ B)=\sigma^{\red(\lambda\overline{x})}(A)\circ\sigma^{\red(\rho\overline{x})}(B)$
            \item $\sigma^{\overline{x}}(X,Y)=\sigma^{\overline{x}}(X),\sigma^{\overline{x}}(Y)$
            \item $\sigma^{\overline{x}}(X;Y)=\sigma^{\red(\lambda\overline{x})}(X);\sigma^{\red(\rho\overline{x})}(Y)$
        \end{itemize}
    \end{mdef}

    Recall above that in Definition~\ref{def:depth_tree_action} we extended depth substitutions to act not only on bunches but on derivations. In a moment, we will show how to extend $\rseq$-substitutions in the same way. But it helps to first pause to look at a useful example: Consider the bunch $A\to B;(C\to A;C)$. With minimal effort, it's easy to see that both $A$s occur under $\rho$; both $C$s occur under $\rho\rho$; and that $B$ occurs under $\varepsilon$. It's also quick to check that $A\to B;(C\to A;C)\Yright B$ is in $\mathbf{B}$. Since the $B$ on the right here occurs under $\varepsilon$, we can see $\rseq$-variable sharing in action---the two sides of this consecution $\rseq$-share a variable. 
    
    And there's more: since both $A$s in this consecution occur under the same sequence and both $B$s in this consecution occur under the same sequence and both $C$s in this consecution occur under the same sequence, any $\rseq$-substitution will map this consecution to another member of $\mathbf{B}$. This is the kind of thing we will observe below in greater generality.

    A skeptical reader might play down this behavior by pointing out that, in this case, every $\rseq$-substitution is in fact a uniform substitution. In response to this skepticism, consider instead the consecution $p\to p;p\Yright p$. As before, it's entirely clear that this is in $\mathbf{B}$. But for this consecution, not every $\rseq$-substitution is a uniform substitution. For example, one can apply an $\rseq$-substitution to this consecution and arrive at $p\to q;p\Yright q$. 
    
    But another way to respond to the skeptic is to point out the following: if $C\subseteq D$ are classes of substitutions, $L_C$ is the class of logics invariant under $C$, and $L_D$ the class of logics invariant under $D$, then $L_D\subseteq L_C$. More informally, the \textit{more} substitutions you allow, \textit{fewer} logics will meet the invariance criteria. So what's surprising, given that the class of $\rseq$-substitutions is quite large, is that there are any interesting logics at all that are invariant under $\rseq$-substitutions. That a logic that's not only interesting, but has been the subject of a good deal of study for going on 50 years should be invariant under this class of substitutions is downright flabbergasting. 

    Of course, having said that, we owe the reader a proof. We turn to that now.
   \begin{mdef}[Analogue of Definition~\ref{def:depth_shifted_function}]\label{def:shifted_sequence_function}
        Let $\sigma$ be an $\rseq$-substitution and let $\overline{w}$ and $\overline{y}$ be (not necessarily nonempty) members of $\rseq$. Then we define $\sigma_{\overline{w}\mapsto\overline{y}}:\rseq\times\At\to \LL$ to be the function given by
        \begin{displaymath}
            \sigma^{\overline{x}}_{\overline{w}\mapsto\overline{y}}(p)=\left\{
                    \begin{array}{rl}
                        \sigma^{\red(\overline{zy})}(p) & \text{ if }\overline{zw}\rightsquigarrow\overline{x}\text{ for some }\overline{z} \\
                        \sigma^{\overline{x}}(p) & \text{ otherwise }
                    \end{array}
                \right.
        \end{displaymath}
    \end{mdef}

    \begin{lemma}
        For all $\overline{w}$ and $\overline{y}$ in $\rseq$, $\sigma^{\overline{x}}_{\overline{w}\mapsto\overline{y}}$ is well-defined. 
    \end{lemma}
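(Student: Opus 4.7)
The only source of potential ambiguity in the definition is in the first clause: the value $\sigma^{\red(\overline{zy})}(p)$ is specified in terms of a witness $\overline{z}$ satisfying $\overline{zw}\rightsquigarrow\overline{x}$, but such a witness need not be unique. (The two clauses themselves cannot conflict, since they are separated by an explicit ``otherwise''.) So the plan is to fix $\overline{x}\in\rseq$ and $p\in\At$, suppose that both $\overline{z_1w}\rightsquigarrow\overline{x}$ and $\overline{z_2w}\rightsquigarrow\overline{x}$, and show that
\[
\sigma^{\red(\overline{z_1y})}(p)=\sigma^{\red(\overline{z_2y})}(p).
\]

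For this it suffices to show $\red(\overline{z_1y})=\red(\overline{z_2y})$. First I would use the hypothesis that $\overline{x}\in\rseq$ together with Theorem~\ref{th:red_unique} to conclude $\red(\overline{z_1w})=\overline{x}=\red(\overline{z_2w})$; this converts the relations $\overline{z_iw}\rightsquigarrow\overline{x}$ into an equality of reduced forms. Then Corollary~\ref{cor:replace}, applied to the equation $\red(\overline{z_1w})=\red(\overline{z_2w})$, immediately yields $\red(\overline{z_1y})=\red(\overline{z_2y})$ for the given $\overline{y}$, and hence the two candidate values of $\sigma_{\overline{w}\mapsto\overline{y}}^{\overline{x}}(p)$ agree.

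There is no real obstacle here once the earlier lemmas about $\red$ are in hand; the argument is essentially a one-line appeal to Corollary~\ref{cor:replace}. The only thing to be careful about is not conflating the syntactic relation $\rightsquigarrow$ with equality: the step that bridges them is the observation that a reduced sequence is its own unique reduct, which is exactly what Theorem~\ref{th:red_unique} supplies.
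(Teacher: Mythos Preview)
Your proposal is correct and follows essentially the same route as the paper: assume two witnesses $\overline{z_1},\overline{z_2}$ with $\overline{z_iw}\rightsquigarrow\overline{x}$, deduce $\red(\overline{z_1w})=\red(\overline{z_2w})$, and apply Corollary~\ref{cor:replace} to obtain $\red(\overline{z_1y})=\red(\overline{z_2y})$. The only difference is that you make explicit the appeal to Theorem~\ref{th:red_unique} (and the fact that $\overline{x}\in\rseq$) in passing from $\overline{z_iw}\rightsquigarrow\overline{x}$ to $\red(\overline{z_iw})=\overline{x}$, which the paper leaves implicit.
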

    \begin{proof}
        Suppose $\overline{z_1w}\squigto\overline{x}$ and $\overline{z_2w}\squigto\overline{x}$. Then $\sigma^{\overline{x}}_{\overline{w}\mapsto\overline{y}}(p)=\sigma^{\red(\overline{z_1y})}(p)$ and $\sigma^{\overline{x}}_{\overline{w}\mapsto\overline{y}}(p)=\sigma^{\red(\overline{z_2y})}(p)$. But since $\overline{z_1w}\squigto\overline{x}$ and $\overline{z_2w}\squigto\overline{x}$, $\red(\overline{z_1w})=\red(\overline{z_2w})$. So by Corollary~\ref{cor:replace}, $\red(\overline{z_1y})=\red(\overline{z_2y})$. Thus $\sigma^{\red(\overline{z_1y})}(p)=\sigma^{\red(\overline{z_2y})}(p)$.
    \end{proof}

    Intuitively, $\sigma_{\overline{w}\mapsto\overline{y}}$ is the $\rseq$-substitution that behaves exactly like $\sigma$ except that it replaces terminal occurrences of $\overline{w}$ with terminal occurrences of $\overline{y}$. A few examples will help make this clear. 
    
    First consider $\sigma_{\varepsilon\mapsto\lambda}$. To calculate e.g. $\sigma^{r}_{\varepsilon\mapsto\lambda}(p)$, we look for a sequence $\overline{z}$ so that $\overline{z}\varepsilon\rightsquigarrow r$. But of course the one-element sequence $r$ is such a $\overline{z}$. So $\sigma_{\varepsilon\mapsto\lambda}^{r}(p)=\sigma^{\red(r\lambda)}(p)=\sigma^\varepsilon(p)$. Next consider $\sigma_{\lambda\mapsto\varepsilon}$. Of particular interest is the computation of $\sigma^{\overline{x}\lambda}_{\varepsilon\mapsto\lambda}(p)$. Here we look for $\overline{z}$ so that $\overline{z}\lambda\rightsquigarrow\overline{x}\lambda$. But of course $\overline{z}=\overline{x}$ will clearly do the job. So $\sigma^{\overline{x}\lambda}_{\lambda\mapsto\varepsilon}(p)=\sigma^{\red(\overline{x}\varepsilon)}(p)=\sigma^{\overline{x}}(p)$. As a trickier case, consider $\sigma^{\rho}_{\lambda\mapsto\varepsilon}(p)$. Since $l\lambda\squigto\rho$, $\sigma^{\rho}_{\lambda\mapsto\varepsilon}(p)=\sigma^{\red(l\varepsilon)}(p)=\sigma^{l}(p)$. Thus, when computing $\sigma_{\lambda\mapsto\varepsilon}$, we `lop off' terminal $\lambda$s, even when they're `hidden' inside $\rho$s. 

    \begin{lemma}[Analogue of Lemma~\ref{lem:depth_shift}]
        For all $X\in\bun$, if $\overline{zw}\rightsquigarrow\overline{x}$, then $\sigma^{\overline{x}}_{\overline{w}\mapsto\overline{y}}(X)=\sigma^{\red(\overline{zy})}(X)$
    \end{lemma}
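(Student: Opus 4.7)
The plan is to prove the lemma by structural induction on $X$, with all the genuine work packed into a single observation: prepending any fixed prefix commutes with reduction, in the sense made available by Corollary~\ref{cor:redred}. The base case, where $X = p \in \At$, is immediate: since $\overline{zw}\squigto\overline{x}$ by assumption, the first clause of Definition~\ref{def:shifted_sequence_function} applies and gives $\sigma^{\overline{x}}_{\overline{w}\mapsto\overline{y}}(p)=\sigma^{\red(\overline{zy})}(p)$ directly. The inductive cases for $\land$, $\lor$, and the extensional comma are equally easy: each of these constructors passes the sequence through unchanged, so the IH applied to each immediate subexpression yields the conclusion by distributing the substitution in lockstep with the clauses of Definition~\ref{def:shifted_sequence_function}.

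The substantive cases are those where the sequence is modified, namely $\neg A$, $A\to B$, $A\circ B$, and $X_1;X_2$. All four follow a common recipe, so I would write out $A\to B$ as the representative and remark that the others are identical with the appropriate prefix ($n$ for $\neg$, and $\lambda,\rho$ for $\circ$ and $;$). By definition, $\sigma^{\overline{x}}_{\overline{w}\mapsto\overline{y}}(A\to B) = \sigma^{\red(l\overline{x})}_{\overline{w}\mapsto\overline{y}}(A)\to \sigma^{\red(r\overline{x})}_{\overline{w}\mapsto\overline{y}}(B)$. To apply the IH to $A$ with target sequence $\red(l\overline{x})$, I take as witness $\overline{z'}=l\overline{z}$: since $\overline{zw}\squigto\overline{x}$, Corollary~\ref{cor:redred} yields $\red(l\overline{zw})=\red(l\cdot\red(\overline{zw}))=\red(l\overline{x})$, so $l\overline{zw}\squigto\red(l\overline{x})$. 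The IH then gives $\sigma^{\red(l\overline{x})}_{\overline{w}\mapsto\overline{y}}(A) = \sigma^{\red(l\overline{zy})}(A)$, and symmetrically on the right we get $\sigma^{\red(r\overline{x})}_{\overline{w}\mapsto\overline{y}}(B) = \sigma^{\red(r\overline{zy})}(B)$. A final invocation of Corollary~\ref{cor:redred} in the reverse direction repackages the two subresults: $\sigma^{\red(\overline{zy})}(A\to B) = \sigma^{\red(l\red(\overline{zy}))}(A)\to\sigma^{\red(r\red(\overline{zy}))}(B) = \sigma^{\red(l\overline{zy})}(A)\to\sigma^{\red(r\overline{zy})}(B)$, closing the case.

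The main obstacle is essentially notational: keeping straight, at each step, which occurrence of $\red$ is being introduced or eliminated, and checking that the sequences fed into subsidiary substitutions are genuinely in $\rseq$ (which they are by construction, because $\red$ is applied to them before they are passed in). No confluence-like argument beyond what Corollary~\ref{cor:redred} already provides is needed, and Theorem~\ref{th:canceling} and Corollary~\ref{cor:replace} are not invoked here at all—they were used to secure well-definedness of $\sigma_{\overline{w}\mapsto\overline{y}}$, and at this point that groundwork is already in place.
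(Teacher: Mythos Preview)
Your proposal is correct and follows essentially the same approach as the paper: structural induction on $X$, with the arrow case as the representative intensional case, using Corollary~\ref{cor:redred} to show that prepending a prefix commutes with reduction so that the IH applies, then repackaging via the same corollary. The paper justifies $l\overline{zw}\squigto\red(l\overline{x})$ by the slightly more direct observation that $l\overline{zw}\squigto l\overline{x}\squigto\red(l\overline{x})$, whereas you route through Corollary~\ref{cor:redred} explicitly, but this is a cosmetic difference.
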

    \begin{proof}
        By induction on $X$. We consider only the arrow case and leave the rest to the reader. For that case, note that $\sigma^{\overline{x}}_{\overline{w}\mapsto\overline{y}}(A\to B)=\sigma^{\red(l\overline{x})}_{\overline{w}\mapsto\overline{y}}(A)\to\sigma^{\red(r\overline{x})}_{\overline{w}\mapsto\overline{y}}(B)$. But since $\overline{zw}\squigto\overline{x}$, we also have that $l\overline{zw}\squigto l\overline{x}\squigto\red(l\overline{x})$ and $r\overline{zw}\squigto r\overline{x}\squigto\red(r\overline{x})$. Thus by the inductive hypothesis, $\sigma^{\red(l\overline{x})}_{\overline{w}\mapsto\overline{y}}(A)=\sigma^{\red(l\overline{zy})}(A)$ and $\sigma^{\red(r\overline{x})}_{\overline{w}\mapsto\overline{y}}(B)=\sigma^{\red(r\overline{zy})}(A)$. Also, since $\red(l)=l$ and $\red(r)=r$, by Corollary~\ref{cor:redred}, $\red(l\overline{zy})=\red(l\red(\overline{zy}))$ and $\red(r\overline{zy})=\red(r\red(\overline{zy}))$. Altogether then, $\sigma^{\overline{x}}_{\overline{w}\mapsto\overline{y}}(A\to B)=\sigma^{\red(l\red(\overline{zy}))}(A)\to\sigma^{\red(r\red(\overline{zy}))}(B)=\sigma^{\red(\overline{zy})}(A\to B)$ as required.
    \end{proof}
    
    \begin{lemma}[Analogue of Lemma~\ref{lem:depsemicolon}]\label{lem:rseqsubbunches}
        Let $Y$ and $X$ be bunches and $\sigma$ be an $\rseq$-substitution substitution. If the highlighted occurrence of $X$ in $Y(\underline{X})$ falls under $\overline{x}\in\rseq$, then for all $\overline{w}\in\rseq$, $\sigma^{\overline{w}}(Y(\underline{X}))=\sigma^{\overline{w}}(Y)(\underline{\sigma^{\red(\overline{xw})}(X)})$. In words, the subbunch of $\sigma^{\overline{w}}(Y)$ that corresponds to a given occurrence of $X$ in $Y$ that falls under $\overline{x}$ is $\sigma^{\red(\overline{xw})}(X)$. 
    \end{lemma}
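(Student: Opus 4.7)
The plan is to mimic the proof of Lemma~\ref{lem:depsemicolon} with a double induction, replacing the integer arithmetic on depths (e.g.\ $n+1$, $n-1$) by reduction-theoretic manipulations of $\rseq$-sequences (e.g.\ $\red(l\overline{x})$, $\red(\lambda\overline{x})$). The outer induction is on the complexity of $Y$ as a bunch; its base case (when $Y$ is a formula) requires an inner induction on the complexity of the formula $Y$. Corollary~\ref{cor:redred} will carry most of the weight, since it lets us freely rearrange nested applications of $\red$ when composing sequences.

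For the innermost base case, $Y$ is an atom, so $X=Y$ and $\overline{x}=\varepsilon$; since $\overline{w}\in\rseq$, $\red(\overline{xw})=\overline{w}$ and the claim is trivial. The inner inductive cases for $\land$ and $\lor$ appeal to the IH directly, since these connectives leave their subformulas' sequences unchanged. The cases $\to$, $\circ$, and $\neg$ are all structurally parallel; I will describe $\to$. Suppose $Y(\underline{X})=A(\underline{X})\to B$, with $X$ falling under $\overline{x}$ in $Y$. By Definition~\ref{def:seq_tree}, $X$ falls under some $\overline{x'}\in\rseq$ in $A$'s own parsing tree, with $\overline{x}=\red(\overline{x'}l)$. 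Unfolding,
\[
\sigma^{\overline{w}}(Y(\underline{X}))=\sigma^{\red(l\overline{w})}(A(\underline{X}))\to\sigma^{\red(r\overline{w})}(B),
\]
and applying the IH to $A(\underline{X})$ at sequence $\red(l\overline{w})$ yields
\[
\sigma^{\red(l\overline{w})}(A(\underline{X}))=\sigma^{\red(l\overline{w})}(A)(\underline{\sigma^{\red(\overline{x'}\,\red(l\overline{w}))}(X)}).
\]
Two applications of Corollary~\ref{cor:redred} (using that $\overline{x'},\overline{w}\in\rseq$) then give $\red(\overline{x'}\,\red(l\overline{w}))=\red(\overline{x'}l\overline{w})=\red(\red(\overline{x'}l)\overline{w})=\red(\overline{xw})$, closing the case. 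The $\circ$ and $\neg$ cases are analogous, with $\lambda,\rho$ and $n$ in place of $l,r$.

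The outer inductive cases are the comma, which is trivial because it does not modify sequences, and the semicolon. For the latter, with $Y(\underline{X})=Y_1(\underline{X});Y_2$ and $X$ falling under $\overline{x}$ in $Y$, Definition~\ref{def:seq_tree} gives some $\overline{x_1}\in\rseq$ so that $X$ falls under $\overline{x_1}$ in $Y_1$'s own parsing tree and $\overline{x}=\red(\overline{x_1}\lambda)$. The outer IH applied at sequence $\red(\lambda\overline{w})$, together with the same two invocations of Corollary~\ref{cor:redred}, completes the case; the case in which $X$ lies inside $Y_2$ is analogous with $\rho$ in place of $\lambda$.

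The main obstacle is pure bookkeeping with reductions. Whereas Lemma~\ref{lem:depsemicolon} rests on the obvious arithmetic fact $(c-1)+1=c$, the analogous identity here---namely $\red(\overline{x'}\,\red(l\overline{w}))=\red(\overline{xw})$ whenever $\overline{x}=\red(\overline{x'}l)$---is a genuine fact about the normal-form theory developed in Appendix B. Corollary~\ref{cor:redred} makes it go through, but each case requires a short bespoke calculation, and one must be explicit about where each $\red$ is being applied to avoid silently transposing characters or missing a reduction.
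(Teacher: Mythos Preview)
Your proposal is correct and is essentially the same proof as the paper's: a double induction on $Y$ (outer on bunch structure, inner on formula structure), in each intensional case relating the sequence $\overline{x}$ in $Y$'s parsing tree to the sequence $\overline{x'}$ in the immediate subexpression's own parsing tree via the appropriate identity $\overline{x}=\red(\overline{x'}s)$, and then collapsing the nested $\red$s with Corollary~\ref{cor:redred}. The paper happens to display the $\to$-case and the right-hand semicolon case while you display the $\to$-case and the left-hand semicolon case, but these are symmetric and both presentations are complete.
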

    \begin{proof}
        Just as in the proof of Lemma~\ref{lem:depsemicolon}, here too the proof is by induction on the complexity of $Y$. And, just as in that lemma, we will examine only two cases: $Y(\underline{X})=A(\underline{C})\to B$ and $Y(\underline{X})=Y_1;Y_2(\underline{X})$.

        For the first of these, notice first that if the highlighted occurrence of $C$ in $A(\underline{C})\to B$ falls under $\overline{x}$, then $\overline{x}=\red(\overline{y}l)$ where the highlighted occurrence of $C$ in $A(\underline{C})$ falls under $\overline{y}\in\rseq$. Now notice that 
        \begin{align*}
            \sigma^{\overline{w}}(A(\underline{C})\to B) & = \sigma^{\red(l\overline{w})}(A(\underline{C}))\to\sigma^{\red(r\overline{w})}(B)\\
            & =\sigma^{\red(l\overline{w})}(A)(\underline{\sigma^{\red(\overline{y}\red(l\overline{w}))}(C)})\to\sigma^{\red(r\overline{w})}(B) \\
            & = \sigma^{\red(l\overline{w})}(A)(\underline{\sigma^{\red(\overline{y}l\overline{w})}(C)})\to\sigma^{\red(r\overline{w})}(B) \\
            & = 
            \sigma^{\red(l\overline{w})}(A)(\underline{\sigma^{\red(\red(\overline{y}l)\red(\overline{w}))}(C)})\to\sigma^{\red(r\overline{w})}(B) \\
            & = 
            \sigma^{\red(l\overline{w})}(A)(\underline{\sigma^{\red(\overline{x}\overline{w})}(C)})\to\sigma^{\red(r\overline{w})}(B) \\
            & =
            \sigma^{\overline{w}}(A\to B)(\underline{\sigma^{\red(\overline{xw})}(C)})
        \end{align*}
        
        For the second, notice that if the highlighted occurrence of $X$ in $Y_1;Y_2(\underline{X})$ falls under $\overline{x}$, then $\overline{x}=\red(\overline{y}\rho)$ where the highlighted occurrence of $X$ in $Y_2(\underline{X})$ falls under $\overline{y}\in\rseq$. Now notice that
        
        \begin{align*}
            \sigma^{\overline{w}}(Y_1;Y_2(\underline{X}))
            &= \sigma^{\red(\lambda\overline{w})}(Y_1);\sigma^{\red(\rho\overline{w})}(Y_2(\underline{X})) \\
            &= \sigma^{\red(\lambda\overline{w})}(Y_1);\sigma^{\red(\rho\overline{w})}(Y_2)(\underline{\sigma^{\red(\overline{y}\red(\rho\overline{w}))}(X)}) \\
            &= \sigma^{\red(\lambda\overline{w})}(Y_1);\sigma^{\red(\rho\overline{w})}(Y_2)(\underline{\sigma^{\red(\overline{y}\rho\overline{w})}(X)}) \\
            &= \sigma^{\red(\lambda\overline{w})}(Y_1);\sigma^{\red(\rho\overline{w})}(Y_2)(\underline{\sigma^{\red(\red(\overline{y}\rho)\red(\overline{w}))}(X)}) \\
            &= \sigma^{\red(\lambda\overline{w})}(Y_1);\sigma^{\red(\rho\overline{w})}(Y_2)(\underline{\sigma^{\red(\overline{xw})}(X)}) \\
            & = \sigma^{\overline{w}}(Y_1;Y_2)(\underline{\sigma^{\red(\overline{xw})}(X)})
        \end{align*}
    \end{proof}
    The following abbreviations are useful in stating the next result:
    \begin{mdef}\label{def:specialsubs}
        \begin{flalign*}
            \sigma_{\lor I_1},\sigma_{\lor E_2}, \sigma_{\lor E_3}, \sigma_{\land I_1}, \sigma_{\land E_2}, & &\\
            \sigma_{\circ E_2}, \sigma_{\neg I_2}, \sigma_{\neg E_1}  &:= \sigma &\\
            \sigma_{\to I_1} &:= \sigma_{\lambda\mapsto\varepsilon} &\\
            \sigma_{\to\text{E}_1}, \sigma_{\circ I_1} &:= \sigma_{\varepsilon\mapsto\lambda} &\\
            \sigma_{\to\text{E}_2}, \sigma_{\circ I_2} &:= \sigma_{\varepsilon\mapsto\rho} &\\
            \sigma_{\lor E_1}, \sigma_{\land E_1}, \sigma_{\circ E_1} &:= \sigma_{\varepsilon\mapsto\overline{x}}\text{, where the highlighted occurrence of $X$ in $Y(\underline{X})$ occurs under $\overline{x}$} &\\
            \sigma_{\neg I_1} &:= \sigma_{n\mapsto\varepsilon} &\\
        \end{flalign*}
    \end{mdef}

    \begin{mdef}[Analogue of Definition~\ref{def:depth_tree_action}]\label{def:tree_of_formulas}
        Given an $\rseq$-substitution $\sigma$, we extend it (again) to a function $\rseq\times\nd{B}\to\Bpd$ as follows: 
        \begin{itemize}\setlength\itemsep{1ex}
            \item If $T$ is \xymatrix@=3mm{\txt{(id)}\ar@{-}[d] \\ A\Yright A}, then $\sigma^{\overline{x}}(T)$ is \xymatrix@=3mm{\txt{(id)}\ar@{-}[d] \\ \sigma^{\overline{x}}(A)\Yright \sigma^{\overline{x}}A}
            \item For each $n+1$-ary rule R of $\nd{B}$, if $T$ is
            \begin{displaymath}
                \xymatrix@=3mm{
                    S_1\ar@{-}[dr] &  \dots  &  S_n\ar@{-}[dl]\\
                    & \txt{(R)}\ar@{-}[d]   &   \\
                    &   X\Yright A  &
                }
            \end{displaymath}
            Then $\sigma^{\overline{x}}(T)$ is
            \begin{displaymath}
                \xymatrix@=3mm{
                    \sigma^{\overline{x}}_{R_1}(S_1)\ar@{-}[dr] &  \dots  &  \sigma^{\overline{x}}_{R_n}(S_n)\ar@{-}[dl]\\
                    & \txt{(R)}\ar@{-}[d]   &   \\
                    &   \sigma^{\overline{x}}(X)\Yright \sigma^{\overline{x}}(A)  &
                }
            \end{displaymath}
        \end{itemize}
    \end{mdef}

    \begin{theorem}[Analogue of Theorem~\ref{thm:depth_inv}]\label{thm:rseq_inv}
        $\nd{B}$ is weakly invariant under $\rseq$-substitutions: if $T\in\nd{B}$ is rooted at $X\Yright A$ and $\sigma$ is an $\rseq$-substitution, then $\sigma^\varepsilon(T)\in\nd{B}$ as well and $\sigma^\varepsilon(T)$ is rooted at $\sigma^\varepsilon(X)\Yright \sigma^\varepsilon(A)$.
    \end{theorem}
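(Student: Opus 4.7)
The plan is induction on the structure of the derivation tree $T$, with case analysis on the rule $R$ labeling the last rule-node. The base case $T = $ (id) is immediate: applying $\sigma^\varepsilon$ to a derivation of $A \Yright A$ yields a derivation of $\sigma^\varepsilon(A) \Yright \sigma^\varepsilon(A)$. For the inductive step, the crucial observation is that each auxiliary substitution $\sigma_{R_i}$ from Definition~\ref{def:specialsubs} is itself an $\rseq$-substitution, so IH applies directly to each subtree with that substitution in place of $\sigma$: $\sigma^\varepsilon_{R_i}(S_i) \in \nd{B}$, rooted at $\sigma^\varepsilon_{R_i}$ applied to $S_i$'s root. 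What remains in each case is to verify that the transformed subtree roots, together with the transformed conclusion $\sigma^\varepsilon(X) \Yright \sigma^\varepsilon(A)$, form a valid instance of $R$.

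For the rules where $\sigma_{R_i} = \sigma$ (namely $\lor$I$_{1,2}$, $\land$I, $\neg$E, and the structural rules absorbed into the $R_2$ family of Definition~\ref{def:depth_tree_action}), verification is immediate by unfolding the recursive clauses defining $\sigma^\varepsilon$. For the genuinely ``shift'' cases ($\to$I, $\to$E, $\circ$I, $\neg$I), the analogue of Lemma~\ref{lem:depth_shift} is the workhorse: it lets us rewrite, for example in the $\to$I case, $\sigma^\varepsilon_{\lambda \mapsto \varepsilon}(X; A) = \sigma^\varepsilon(X); \sigma^\rho(A)$ (using $\varepsilon\lambda \squigto \varepsilon$ on the left and $l\lambda \squigto \rho$ on the right), which is precisely what is needed for a ($\to$I) step to conclude $\sigma^\varepsilon(X) \Yright \sigma^\rho(A) \to \sigma^\varepsilon(B) = \sigma^\varepsilon(A \to B)$. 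Analogous reduction-chasing handles the $\to$E, $\circ$I, and $\neg$I cases.

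For the ``context-elimination'' cases ($\lor$E, $\land$E, $\circ$E, and Cut), where $\sigma_{R_1} = \sigma_{\varepsilon \mapsto \overline{x}}$ with $\overline{x}$ being the sequence under which the eliminated subbunch falls in $Y(\underline{X})$, we additionally need Lemma~\ref{lem:rseqsubbunches} in order to align the ``hole'' in the conclusion's context with what $\sigma_{R_1}$ does to the eliminated premise. The main obstacle throughout is the symbolic bookkeeping: each case requires identifying the right witness $\overline{z}$ for the shift lemma and then chasing the interactions of the reduction rules (\ref{reduct:normal_START})--(\ref{reduct:dne}). I expect the $\circ$E case to be the most delicate, since it combines both an elimination context (invoking Lemma~\ref{lem:rseqsubbunches}) and a semicolon structure that forces careful attention to the $\lambda/\rho$ reductions, paralleling and extending the corresponding case in the proof of Theorem~\ref{thm:depth_inv}.
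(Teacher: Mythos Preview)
Your overall strategy is correct and matches the paper's proof exactly: induction on $T$, with the analogue of Lemma~\ref{lem:depth_shift} doing the work in the shift cases and Lemma~\ref{lem:rseqsubbunches} doing the work in the context-elimination cases. However, the one worked example you give (the $\to$I case) contains computational errors that would prevent it from going through as written.

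Concretely: the premise of $\to$I is $X;A\Yright B$, so by the recursive clause $\sigma^\varepsilon_{\lambda\mapsto\varepsilon}(X;A)=\sigma^\lambda_{\lambda\mapsto\varepsilon}(X);\sigma^\rho_{\lambda\mapsto\varepsilon}(A)$. For the left component, the witness is $\overline{z}=\varepsilon$ with $\varepsilon\lambda\squigto\lambda$ (not $\varepsilon\lambda\squigto\varepsilon$ as you wrote), giving $\sigma^{\red(\varepsilon)}(X)=\sigma^\varepsilon(X)$. For the right component, you correctly identify the reduction $l\lambda\squigto\rho$ and hence the witness $\overline{z}=l$, but then the shift lemma gives $\sigma^{\red(l\varepsilon)}(A)=\sigma^l(A)$, not $\sigma^\rho(A)$. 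Likewise on the right of the consecution, $\sigma^\varepsilon_{\lambda\mapsto\varepsilon}(B)$ uses the witness $\overline{z}=r$ (since $r\lambda\squigto\varepsilon$), yielding $\sigma^r(B)$, not $\sigma^\varepsilon(B)$. So the transformed premise is $\sigma^\varepsilon(X);\sigma^l(A)\Yright\sigma^r(B)$, which matches the conclusion $\sigma^\varepsilon(X)\Yright\sigma^l(A)\to\sigma^r(B)=\sigma^\varepsilon(A\to B)$. Your claimed identity $\sigma^\varepsilon(A\to B)=\sigma^\rho(A)\to\sigma^\varepsilon(B)$ is simply false. Once these slips are corrected, the case (and by extension the whole argument) goes through exactly as the paper has it.
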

    \begin{proof}
        As before, the definitions suffice for the second clause---that is, we can see immediately that $\sigma^\varepsilon(T)$ will be rooted at a node labeled by $\sigma^\varepsilon(X)\Yright\sigma^\varepsilon(A)$. We show that $T\in\nd{B}$ by induction on $T$. As before, the only thing that actually needs showing, given the IH, is that the final rule-labeled node will, together with its neighbors, form an instance of the corresponding rule. 
        
        We examine a few illustrative examples and leave the rest to the reader. Throughout, when discussing the case where the last rule node in $T$ is labeled by the $n+1$-ary rule $R$, we assume $T$ has the form
        \begin{displaymath}
            \xymatrix@=3mm{
                S_1\ar@{-}[dr]  &   \dots   &   S_n\ar@{-}[dl] \\
                    &   \txt{(R)}\ar@{-}[d] & \\
                    &   X'\Yright A'  &
            }
        \end{displaymath}
        
        \begin{description}
            \item[($\to$I)-case] In this case, $A'=A\to B$ and $S_1$ is rooted at $X;A\Yright B$ and $\sigma^\varepsilon(T)$ is rooted at $\sigma^{\varepsilon}(X)\Yright\sigma^l(A)\to\sigma^r(B)$. By IH, $\sigma^\varepsilon_{\to\text{I}_1}(S_1)\in\nd{B}$ and is rooted at $\sigma^\varepsilon_{\to\text{I}_1}(X;A)\Yright \sigma^\varepsilon_{\to\text{I}_1}(B)$.

            But now observe that $\sigma^\varepsilon_{\to\text{I}_1}(X;A)=\sigma^\lambda_{\lambda\mapsto\varepsilon}(X);\sigma^\rho_{\lambda\mapsto\varepsilon}(A)=\sigma^\varepsilon(X);\sigma^l(A)$ and $\sigma^\varepsilon_{\to\text{I}_1}(B)=\sigma^r(B)$. Thus $\sigma^\varepsilon_{\to\text{I}_I}(S_1)$ is rooted at $\sigma^\varepsilon(X);\sigma^l(A)\Yright\sigma^r(B)$, and thus $\sigma^\varepsilon(T)\in\nd{B}$. 

            \item[($\to$E)-case] In this case $X'=X;Y$, $A'=B$, $S_1$ is rooted at $X\Yright A\to B$ for some $A$, $S_2$ is rooted at $Y\Yright A$, and $\sigma^\varepsilon(T)$ is rooted at $\sigma^\lambda(X);\sigma^\rho(Y)\Yright\sigma^\varepsilon(B)$. By IH, $\sigma^\varepsilon_{\to\text{E}_1}(S_1)\in\nd{B}$ and is rooted at $\sigma^\varepsilon_{\to\text{E}_1}(X)\Yright \sigma^\varepsilon_{\to\text{E}_1}(A\to B)$, and $\sigma^\varepsilon_{\to\text{E}_2}(S_2)\in\nd{B}$ and is rooted at $\sigma^\varepsilon_{\to\text{E}_2}(Y)\Yright\sigma^\varepsilon_{\to\text{E}_2}(A)$.

            With a bit of effort, one can now observe that 
            $\sigma^{\varepsilon}_{\to\text{E}_1}(X)=\sigma^\lambda(X)$, that $\sigma^{l}_{\to\text{E}_1}(A)=\sigma^{\rho}(A)$, and that $\sigma^{r}_{\to\text{E}_1}(B)=\sigma^{\varepsilon}(B)$. Thus $\sigma^\varepsilon_{\to\text{E}_1}(S_1)$ is rooted at $\sigma^\lambda(X)\Yright\sigma^\rho(A)\to\sigma^\varepsilon(B)$.
            
            In a similar way, one sees that $\sigma^{\varepsilon}_{\to\text{E}_2}(Y)=\sigma^{\rho}(Y)$ and that $\sigma^{\varepsilon}_{\to\text{E}_2}(A)= \sigma^{\rho}(A)$. Thus $\sigma^{\varepsilon}_{\to\text{E}_2}(S_2)$ is rooted at $\sigma^\rho(Y)\Yright\sigma^\rho(A)$. So $\sigma^\varepsilon(T)\in\nd{B}$.
            \item[($\circ$E)-case] In this case, $X'=Y(X)$, $A'=C$, $S_1$ is rooted at $X\Yright A\circ B$, $S_2$ is rooted at $Y(A;B)\Yright C$, and $\sigma^\varepsilon(T)$ is rooted at $\sigma^\varepsilon(Y(X))\Yright\sigma^\varepsilon(C)$. 

            By Lemma~\ref{lem:rseqsubbunches}, since the highlighted occurrence of $X$ in $Y(\underline{X})$ occurs under $\overline{x}$, we know that $\sigma^{\varepsilon}(Y(\underline{X}))=\sigma^{\varepsilon}(Y)(\underline{\sigma^{\overline{x}}(X)})$. By the same reasoning, $\sigma^{\varepsilon}_{\circ E_2}(Y(\underline{A;B}))=\sigma^{\varepsilon}_{\circ E_2}(Y)(\underline{\sigma^{\overline{x}}_{\circ E_2}(A;B)})$, and thus $\sigma^{\varepsilon}_{\circ E_2}(Y(\underline{A;B}))=\sigma^{\varepsilon}_{\circ E_2}(Y)(\underline{\sigma^{\lambda\overline{x}}_{\circ E_2}(A);\sigma^{\rho\overline{x}}_{\circ E_2}(B)})$. 

            Again, shifting all the superscripts takes a bit of effort, but the results are the following. First, on the $S_1$ side we have $\sigma^\varepsilon_{\circ \text{E}_1}(X)=\sigma^{\overline{x}}(X)$, $\sigma^\lambda_{\circ \text{E}_1}{(A)}=\sigma^{\lambda\overline{x}}(A)$, and $\sigma^\rho_{\circ \text{E}_1}(B)=\sigma^{\rho\overline{x}}(B)$. So $\sigma^\varepsilon_{\circ \text{E}_1}(S_1)$ is rooted at $\sigma^{\overline{x}}(X)\Yright\sigma^{\lambda\overline{x}}(A)\circ\sigma^{\rho\overline{x}}(B)$. 

            Then, on the $S_2$ side we see that $\sigma^{\varepsilon}_{\circ E_2}(Y(A;B))=\sigma^{\varepsilon}(Y)(\sigma^{\lambda\overline{x}}(A);\sigma^{\rho\overline{x}}(B))$ and $\sigma^{\varepsilon}_{\circ E_2}(C)= \sigma^{\varepsilon}(C)$. So $\sigma^\varepsilon_{\circ \text{E}_2}(S_2)$ is rooted at $\sigma^{\varepsilon}(Y)(\sigma^{\lambda\overline{x}}(A);\sigma^{\rho\overline{x}}(B))\Yright\sigma^{\varepsilon}(C)$.

            It follows now that $\sigma^\varepsilon(T)\in\nd{B}$ as required.
        \end{description}    
    \end{proof}

    We note that while Theorem~\ref{thm:rseq_inv} is an analogue of Theorem~\ref{thm:depth_inv}, there is an important difference: Theorem~\ref{thm:depth_inv} proves \textit{strong} invariance---for all depth substitutions $d$ \textbf{and all $n\in\mathbb{Z}$}, if $T\in\nd{B}$, then $d^n(T)\in\nd(B)$. Theorem~\ref{thm:rseq_inv}, on the other hand, proves only \textit{weak} invariance. 
    
    As it turns out, the strong invariant analogue of Theorem~\ref{thm:rseq_inv} isn't just hard to prove---it's actually false. Examining nearly any derivation containing a piece of intensional vocabulary will make this apparent, so we leave it to the reader to check a case they find illuminating.

    Lastly, we note that just as with Theorem~\ref{thm:depth_inv}, Theorem~\ref{thm:rseq_inv} has some notable corollaries. First, we need a couple definitions:
    \begin{mdef}
        Let $\seq*$ be the set of sequences in the alphabet $\{l,r,\lambda,\rho,n,\varepsilon\}$, and let $\varepsilon^*$ be the empty $\seq^*$.
    \end{mdef}

    \begin{mdef}
        Where $\overline{x}$ and $\overline{y}$ are both members of $\seq^*$ of length $n$, we write $\sigma_{\overline{x}\mapsto\overline{y}}$ for $(\dots(\sigma_{x_1\mapsto y_1})\dots)_{x_n\mapsto y_n}$
    \end{mdef}

    \begin{cor}[Analogue of Corollary~\ref{cor:dep_der_rules}]
        Suppose $\langle\{X_i\Yright A_i\}_{i=1}^m\mid Y\Yright B\rangle\in\ndd{B}$ and $\sigma$ is an $\rseq$-substitution. Then for each $1\leq i\leq m$ there are $\overline{x^i}\in\seq^*$ and $\overline{y^i}\in\seq^*$ so that $\langle\{\sigma^\varepsilon_{\overline{x^i}\mapsto\overline{y^i}}(X_i)\Yright \sigma^\varepsilon_{\overline{x^i}\mapsto\overline{y^i}}(A_i)\}_{i=1}^m\mid \sigma^\varepsilon(Y)\Yright \sigma^\varepsilon(B)\rangle\in\ndd{B}$.
    \end{cor}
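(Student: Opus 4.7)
The plan is to mirror the very short proof of Corollary~\ref{cor:dep_der_rules}: pick any $T\in\nd{B}$ witnessing derivability of $\langle\{X_i\Yright A_i\}_{i=1}^m\mid Y\Yright B\rangle$, form $\sigma^\varepsilon(T)$ via Definition~\ref{def:tree_of_formulas}, and invoke Theorem~\ref{thm:rseq_inv} to conclude that $\sigma^\varepsilon(T)\in\nd{B}$ and is rooted at $\sigma^\varepsilon(Y)\Yright\sigma^\varepsilon(B)$. What remains is to describe the open leaves of $\sigma^\varepsilon(T)$, since those leaves supply the antecedent side of the new $\ndd{B}$-entry.

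To do that, I would trace through Definition~\ref{def:tree_of_formulas}: each time the recursion passes from a rule-conclusion node to its $j$th premise, the ambient substitution $\tau$ gets replaced by $\tau_{R_j}$ from Definition~\ref{def:specialsubs}. These modifications come in just three flavors --- identity (for most premises), a single-letter shift $\tau_{x\mapsto y}$ with $x,y\in\{l,r,\lambda,\rho,n,\varepsilon\}$ (for $\to$I$_1$, both $\to$E slots, both $\circ$I slots, and $\neg$I$_1$), and a ``context'' shift $\tau_{\varepsilon\mapsto\overline{x}}$ with $\overline{x}\in\rseq$ possibly multi-letter (for $\lor$E$_1$, $\land$E$_1$, and $\circ$E$_1$). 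A simple induction on the length of the path from the root of $T$ down to each open leaf then shows that the substitution applied to the leaf $X_i\Yright A_i$ in $\sigma^\varepsilon(T)$ is an iterated composition of such modifications, built starting from $\sigma$.

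The one wrinkle is the multi-letter context shift $\tau_{\varepsilon\mapsto\overline{x}}$, which is not directly expressible as a single entry in the $\seq^*$-indexed composition introduced just before the corollary. This I would handle with a small calculation using Corollary~\ref{cor:redred}: for any $\overline{x}=x_1\cdots x_k\in\rseq$ and any $\rseq$-substitution $\tau$,
\begin{displaymath}
    \tau_{\varepsilon\mapsto x_1\cdots x_k} = (\cdots(\tau_{\varepsilon\mapsto x_k})_{\varepsilon\mapsto x_{k-1}}\cdots)_{\varepsilon\mapsto x_1}.
\end{displaymath}
Unpacking every context shift in this way lets me concatenate the single-letter entries accumulated along the path from the root down to the $i$th leaf into sequences $\overline{x^i},\overline{y^i}\in\seq^*$ for which the substitution landing on that leaf is exactly $\sigma^\varepsilon_{\overline{x^i}\mapsto\overline{y^i}}$. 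Thus $\sigma^\varepsilon(T)$ witnesses the required entry in $\ndd{B}$.

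The main obstacle I anticipate is verifying the decomposition identity above carefully enough to be sure the induction goes through uniformly across the context-rule cases --- in particular, making precise that when one of these shifts is composed with later single-letter shifts accumulated lower in the tree, the final result is still captured by a single sequence $\overline{x^i}\mapsto\overline{y^i}$. Everything else is routine bookkeeping through Definitions~\ref{def:tree_of_formulas} and~\ref{def:specialsubs}; the substantive invariance work has already been done by Theorem~\ref{thm:rseq_inv}.
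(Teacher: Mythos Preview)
Your proposal is correct and matches the paper's (implicit) approach: the paper states this corollary with no proof at all, treating it as immediate from Theorem~\ref{thm:rseq_inv} in exactly the way Corollary~\ref{cor:dep_der_rules} follows from Theorem~\ref{thm:depth_inv}. You have in fact supplied more detail than the paper does, and the extra detail is warranted --- unlike the depth case, here one genuinely must check that the leaf substitutions produced by Definition~\ref{def:tree_of_formulas} land in the class $\sigma_{\overline{x}\mapsto\overline{y}}$ with $\overline{x},\overline{y}\in\seq^*$, and your decomposition $\tau_{\varepsilon\mapsto x_1\cdots x_k}=(\cdots(\tau_{\varepsilon\mapsto x_k})_{\varepsilon\mapsto x_{k-1}}\cdots)_{\varepsilon\mapsto x_1}$ is exactly the missing bookkeeping. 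The ``obstacle'' you flag is not a real one: once every context shift is unpacked into single-letter shifts, the composite along the root-to-leaf path is literally the iterated form $(\cdots(\sigma_{a_1\mapsto b_1})\cdots)_{a_N\mapsto b_N}$, which is by definition $\sigma_{\overline{x^i}\mapsto\overline{y^i}}$ with $\overline{x^i}=a_1\cdots a_N$ and $\overline{y^i}=b_1\cdots b_N$.
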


    \begin{cor}[Analogue of Corollary~\ref{cor:dep_simple_inv}]
        If $X\Yright A\in\mathbf{B}$ and $\sigma$ is a depth substitution, then $\sigma^\varepsilon(X)\Yright\sigma^\varepsilon(A)$ is in $\mathbf{B}$ as well. 
    \end{cor}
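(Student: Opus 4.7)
The plan is to derive this as an immediate specialization of the preceding corollary (the analogue of Corollary~\ref{cor:dep_der_rules}). By definition of $\mathbf{B}$, the hypothesis $X\Yright A\in\mathbf{B}$ unpacks to $\langle\emptyset\mid X\Yright A\rangle\in\ndd{B}$. Instantiating the preceding corollary with $m=0$, the set of open hypotheses $\{X_i\Yright A_i\}_{i=1}^m$ is empty---so there are no $\overline{x^i},\overline{y^i}$ to choose---and the conclusion collapses to $\langle\emptyset\mid\sigma^\varepsilon(X)\Yright\sigma^\varepsilon(A)\rangle\in\ndd{B}$, which is exactly the statement that $\sigma^\varepsilon(X)\Yright\sigma^\varepsilon(A)\in\mathbf{B}$.

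If I wanted to bypass the preceding corollary and argue directly, I would go straight to Theorem~\ref{thm:rseq_inv}. Pick a witnessing derivation $T\in\nd{B}$ rooted at $X\Yright A$ with no open leaves (i.e.\ every leaf of $T$ is an (id)-labeled node). Theorem~\ref{thm:rseq_inv} then produces $\sigma^\varepsilon(T)\in\nd{B}$ rooted at $\sigma^\varepsilon(X)\Yright\sigma^\varepsilon(A)$. A glance at Definition~\ref{def:tree_of_formulas} shows that the substitution action sends each (id)-labeled leaf to another (id)-labeled leaf and introduces no new open (consecution-only) leaves, so $\sigma^\varepsilon(T)$ is again a derivation without open leaves, and therefore witnesses provability of $\sigma^\varepsilon(X)\Yright\sigma^\varepsilon(A)$.

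There is no real obstacle here: the heavy lifting was already completed in Theorem~\ref{thm:rseq_inv}, whose proof handled every rule of $\nd{B}$ case-by-case using Lemma~\ref{lem:rseqsubbunches} and the superscript-bookkeeping built into Definition~\ref{def:specialsubs}. The invariance statement for provable consecutions then drops out essentially for free, exactly as Corollary~\ref{cor:dep_simple_inv} dropped out of Theorem~\ref{thm:depth_inv} in the depth-substitution setting.
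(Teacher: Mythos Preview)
Your proposal is correct and matches the paper's intended reasoning: the paper states this corollary without proof, treating it (like its depth-substitution analogue, Corollary~\ref{cor:dep_simple_inv}) as an immediate consequence of the preceding results, which is exactly what you do by specializing the previous corollary to $m=0$ or by invoking Theorem~\ref{thm:rseq_inv} directly. Note also that the phrase ``depth substitution'' in the statement is evidently a typo for ``$\rseq$-substitution,'' and you have read it the right way.
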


    \begin{cor}[Analogue of Corollary~\ref{cor:dep_share}]
        If $X\Yright A\in\mathbf{B}$, then $X$ and $A$ $\rseq$-share a variable.
    \end{cor}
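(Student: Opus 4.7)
The plan is to mimic the sketch of Theorem~\ref{th:deth_inv_gives_depth_rel}, replacing depth substitutions by $\rseq$-substitutions. Pick an $\rseq$-substitution $\sigma$ that is atomic (its range sits inside $\At$) and injective as a function $\rseq \times \At \to \At$; such a $\sigma$ exists because $\rseq \times \At$ is countable while $\At$ is countably infinite. Given $X \Yright A \in \mathbf{B}$, the preceding corollary yields $\sigma^\varepsilon(X) \Yright \sigma^\varepsilon(A) \in \mathbf{B}$, which in turn lies in $\mathbf{R}$ since $\mathbf{B} \subseteq \mathbf{R}$. Theorem~\ref{th:strongervsp} then supplies an atom $q$ appearing as a subexpression of both $\sigma^\varepsilon(X)$ and $\sigma^\varepsilon(A)$.

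The remaining move is to trace $q$ back to $\rseq$-variable sharing in $X$ and $A$ themselves. Lemma~\ref{lem:rseqsubbunches} with $\overline{w} = \varepsilon$ (and using $\red(\overline{x}\varepsilon) = \overline{x}$) gives the key principle: if an atom $p$ falls under $\overline{x}$ in a bunch $Z$, then the corresponding position in $\sigma^\varepsilon(Z)$ contains $\sigma^{\overline{x}}(p)$, which is itself a single atom located under $\overline{x}$ because $\sigma$ is atomic. A parallel induction on $Z$ (matching the case analysis in that lemma) establishes the converse: every atom of $\sigma^\varepsilon(Z)$ arises this way from a unique atom of $Z$ at a unique position. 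Applying this to $X$ and $A$, I can write $q = \sigma^{\overline{x}_X}(p_X) = \sigma^{\overline{x}_A}(p_A)$ with $p_X$ falling under $\overline{x}_X$ in $X$ and $p_A$ falling under $\overline{x}_A$ in $A$. Injectivity of $\sigma$ then forces $p_X = p_A$ and $\overline{x}_X = \overline{x}_A$, which is exactly the $\rseq$-sharing we want.

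The main obstacle, such as it is, is precisely this converse direction of the trace-back, since Lemma~\ref{lem:rseqsubbunches} is phrased in terms of a pre-chosen highlighted occurrence in $Z$ rather than as a classification of all atoms of $\sigma^\varepsilon(Z)$. But the short induction required parallels the proof of that lemma exactly: the arrow and semicolon cases carry all the content (they are where the sequence annotation actually updates under the reduction relation), while the remaining connective and bunch-comma cases propagate $\overline{x}$ unchanged. No new reduction-theoretic input is required beyond what is already packaged in Corollaries~\ref{cor:redred} and~\ref{cor:replace}, so I expect this step to go through without surprise.
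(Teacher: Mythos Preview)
Your proposal is correct and follows essentially the same approach as the paper's proof: pick an atomic injective $\rseq$-substitution, apply the preceding corollary, use $\mathbf{B}\subseteq\mathbf{R}$ together with Theorem~\ref{th:strongervsp}, and then trace the shared atom back via injectivity. The paper compresses the trace-back into a single sentence, whereas you spell it out more carefully via Lemma~\ref{lem:rseqsubbunches}; this extra detail is sound and arguably an improvement in rigor, but the underlying argument is the same.
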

    \begin{proof}
        Let $i$ be an atomic injective $\rseq$-substitution. By the previous corollary, $i^\varepsilon(X)\Yright i^\varepsilon(A)\in\mathbf{B}$ as well. Since $\mathbf{B}\subseteq\mathbf{R}$, by the extension of Belnap's variable sharing result proved in the appendix, $i^\varepsilon(X)$ and $i^\varepsilon(A)$ share a variable. But then since $i$ is atomic injective, $X$ and $A$ must $\rseq$-share a variable.
    \end{proof}
 
    \section{Discussion}

    There is a sense---a sense we do not know how to express quite as concretely as we would like---in which $\rseq$-hyperformalism seems to be the strongest plausible flavor of hyperformalism on offer for the language we've considered in this paper. The loose idea is this: no plausible sort of hyperformalism ought to track where formulas occur in \textit{extensional} contexts. And $\rseq$-hyperformalism tracks, for the intensional parts of the language being considered, everything one might plausibly track: not only \textit{whether} but also \textit{how} each atom occurs in the scope of each intensional operator.

    At any rate, whether $\rseq$-hyperformalism is the strongest sort of hyperformalism or not, it's certainly \textit{quite strong}. It can also be motivated along topical lines as described in the introduction.

    What we've shown in this paper is that $\rseq$-hyperformalism is not an accidental feature of the logic $\mathbf{B}$. It is instead something that $\mathbf{B}$ inherits from the structure of its class of derivations which also enjoys a version of $\rseq$-hyperformalism. 

    Finally, we note that there are four obvious ways to extend the project here. First: one can add reduction rules to the ones considered here and attempt to thereby develop novel flavors of hyperformalism for logics other than $\mathbf{B}$. Second: one can examine logics weaker than $\mathbf{B}$. One natural option---explored in \cite{ferguson2023topic} is the logic $\mathbf{BM}$. Third: one can (see e.g. \cite{Restall2000} for details) augment the calculus we gave for $\B$ above with lambda terms. Lambda terms \textit{just are} names of proofs, which are in turn just derivations. We've seen that $\rseq$-substitutions act naturally on the last of these, so there's a natural way for them to act on lambda terms. The details here should be worked out. Fourth: for those logics in this area that admit a normalization theorem, the proofs in fact form a category. So we actually have a natural way for $\rseq$-substitutions to act on a certain type of category. Details here also need to be worked out as well. We are of the impression that these last two inroads are the most promising way forward for research on hyperformalism. 

\newpage
\appendix

\section{Proof that \textbf{R} has the strong variable sharing property}

Following \cite{Read1988}, for each bunch $X$, define the characteristic formula\footnote{Read restricts his definition to \textit{finite} bunches. We don't explicitly do so because, as we've defined them in this paper, all bunches are already finite.} of $X$, $\cf(X)$ as follows:
\begin{itemize}
    \item If $X=A$ is an atomic bunch (that is, a formula), then $\cf(X)=X$.
    \item If $X=Y_1,Y_2$, then $\cf(X)=\cf(Y_1)\land\cf(Y_2)$.
    \item If $X=Y_1;Y_2$, then $\cf(X)=\cf(Y_1)\circ\cf(Y_2)$.
\end{itemize}

It follows from Proposition 4.3 in \cite{Read1988} that in $\mathbf{R}$ we have the following:
\begin{itemize}
    \item $X\vdash_{\mathbf{R}}\cf(X)$ 
    \item If $Y(X)\vdash A$, then $Y(\cf(X))\vdash A$.
\end{itemize}

We now define the translation function $\tau$ as follows:
\begin{itemize}
    \item If $A$ is atomic, then $\tau(A)=A$.
    \item $\tau(A\land B)=\tau(A)\land\tau(B)$.
    \item $\tau(A\lor B)=\tau(A)\lor\tau(B)$.
    \item $\tau(\neg A)=\neg\tau(A)$.
    \item $\tau(A\to B)=\tau(A)\to\tau(B)$.
    \item $\tau(A\circ B)=\neg(\tau(A)\to\neg\tau(B))$.
    \item If $X$ is a bunch, then $\tau(X)=\tau(\cf(X))$.
\end{itemize}

\begin{lemma}
    For all formulas $A$, $\tau(A)\vdash_\mathbf{R} A$. 
\end{lemma}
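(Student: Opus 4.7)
The natural approach is to prove the stronger biconditional $\tau(A) \dashv\vdash_{\mathbf{R}} A$ by simultaneous induction on the complexity of $A$. Simultaneous induction is forced by the $\to$-case: to derive $\tau(B) \to \tau(C) \Yright B \to C$ via $(\to\mathrm{I})$, one must first produce $\tau(B) \to \tau(C); B \Yright C$, and producing this requires both $B \vdash \tau(B)$ (to feed $(\to\mathrm{E})$ against $\tau(B) \to \tau(C)$) and $\tau(C) \vdash C$ (to convert the resulting $\tau(C)$ into $C$).

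The atomic base case is immediate by $(\mathrm{id})$. The $\land$- and $\lor$-cases are routine: apply the respective introduction and elimination rules together with IH, using $\mathbf{eW}$ to merge duplicated antecedents as needed. The $\to$-case proceeds as sketched above, symmetrically for both directions.

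The two substantive cases are $\neg$ and $\circ$. For $A = \neg B$, what is required is contraposition applied to the IH: from $B \vdash \tau(B)$ I want $\neg \tau(B) \vdash \neg B$. Contraposition as a derived rule in $\mathbf{R}$ is standard; it is obtained via $(\neg\mathrm{I}_2)$ combined with double negation introduction $A \vdash \neg\neg A$ (itself a theorem of $\mathbf{hR}$, and so derivable in $\mathbf{R}$). Concretely: cut the IH against $\tau(B) \vdash \neg\neg \tau(B)$ to get $B \vdash \neg\neg \tau(B)$, then invoke $(\neg\mathrm{I}_2)$ with $A = B$ and $D = \neg\tau(B)$, supplying $\neg \tau(B) \Yright \neg \tau(B)$ on the right premise by $(\mathrm{id})$, and use the structural rules $\mathbf{B}, \mathbf{C}, \mathbf{W}$ to reduce the resulting semicolon antecedent down to the desired single-formula antecedent. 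The converse direction is symmetric. For $A = B \circ C$, where $\tau(A) = \neg(\tau(B) \to \neg \tau(C))$, the work reduces to the standard $\mathbf{R}$-equivalence $X \circ Y \dashv\vdash \neg(X \to \neg Y)$, derivable in the bunched ND system from $(\circ\mathrm{I})$, $(\circ\mathrm{E})$, and the negation rules; combining this with IH on $B$ and $C$ together with the monotonicity of $\circ$ in both arguments (immediate from $(\circ\mathrm{I})$ and Cut) yields both directions.

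The main obstacle is the semicolon bookkeeping in the $\neg$- and $\circ$-cases: the output of $(\neg\mathrm{I}_2)$, and the intermediate uses of $(\circ\mathrm{E})$ and $(\to\mathrm{E})$, produce semicolon bunches that must be massaged back to single-formula antecedents. With $\mathbf{R}$'s full suite of structural rules on semicolon at hand this is always possible, but it is where essentially all the concrete work takes place.
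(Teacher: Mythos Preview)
Your plan is sound, and on the pivotal $\circ$-case it coincides with the paper's argument: the paper proves the lemma by induction on $A$, treating only the fusion case explicitly by first establishing the general $\mathbf{R}$-fact $\neg(A\to\neg B)\vdash_{\mathbf{R}} A\circ B$ and then combining this with the inductive hypotheses via $(\circ\mathrm{E})$---which is precisely your equivalence $X\circ Y\dashv\vdash_{\mathbf{R}}\neg(X\to\neg Y)$ plus monotonicity, spelled out concretely.

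Where you depart is in running a simultaneous induction on both directions of $\tau(A)\dashv\vdash_{\mathbf{R}} A$, and you are right that this is not optional: $\to$ is antitone in its first argument and $\neg$ is antitone, so the $\to$- and $\neg$-cases of the forward direction genuinely require the converse hypothesis. The paper proves the two directions as separate consecutive lemmas and dismisses everything here except fusion as routine; read literally that leaves a circularity which your simultaneous treatment dissolves. One small wrinkle in your $\neg$-sketch: $(\neg\mathrm{I}_2)$ requires its first premise already in the shape $X;C\Yright\neg D$, but after your cut you have only $B\Yright\neg\neg\tau(B)$, and the paper's calculus has no $\mathbf{t}$ constant and no structural rule that manufactures the missing semicolon. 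Routing the contraposition through the equivalence with $\mathbf{hR}$ (axiom A8) fixes this immediately, so it is a bookkeeping point rather than a gap in the plan.
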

\begin{proof}
    By induction on $A$. The only case worth discussing is the fusion case, for which we first observe the following:    
    
    \begin{prooftree}
            \hypo{\neg(A\to\neg B)\vdash_{\mathbf{R}}\neg(A\to\neg B)}
            \hypo{A\vdash_{\mathbf{R}}\tau(A)}
            \hypo{B\vdash_{\mathbf{R}}\tau(B)}            \infer2{A;B\vdash_{\mathbf{R}}\tau(A)\circ\tau(B)}
            \hypo{\neg(A\circ B)\vdash_{\mathbf{R}}\neg(A\circ B)}
            \infer2{A;\neg(A\circ B)\vdash_{\mathbf{R}}\neg B}
            \infer1{\neg(A\circ B);A\vdash_{\mathbf{R}}\neg B}
            \infer1{\neg(A\circ B)\vdash_{\mathbf{R}} A\to\neg B}
            \hypo{\neg(A\to\neg B)\vdash_{\mathbf{R}}\neg(A\to\neg B)}
            \infer2{\neg(A\circ B)\vdash_{\mathbf{R}}\neg\neg(A\to\neg B)}
            \infer2{\neg(A\to\neg B)\vdash_{\mathbf{R}}\neg\neg(A\circ B)}
            \infer1{\neg(A\to\neg B)\vdash_{\mathbf{R}} A\circ B}
    \end{prooftree}\bigskip
    
    Thus, in particular, and using the inductive hypothesis, we have\bigskip
    
    \begin{prooftree}
        \hypo{\neg(\tau(A)\to\neg\tau(B))\vdash_{\mathbf{R}}\tau(A)\circ\tau(B)}
        \hypo{\tau(A)\vdash_{\mathbf{R}} A}
        \hypo{\tau(B)\vdash_{\mathbf{R}} B}
        \infer2{\tau(A);\tau(B)\vdash_{\mathbf{R}} A\circ B}
        \infer2{\neg(\tau(A)\to\neg\tau(B))\vdash_{\mathbf{R}}A\circ B}
    \end{prooftree}
\end{proof}

\begin{lemma}\label{lem:translationsobeynl}
    Suppose $X$ and $Y$ are bunches and $A\in\LL$.
    \begin{enumerate}[i.]
        \item $X\vdash_{\mathbf{R}}\tau(X)$
        \item If $Y(X)\vdash_{\mathbf{R}} A$, then $Y(\tau(X))\vdash A$
    \end{enumerate}
\end{lemma}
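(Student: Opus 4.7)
The plan is to exploit the two facts from \cite{Read1988} recorded just before the definition of $\tau$---namely $X\vdash_{\mathbf{R}}\cf(X)$, and $Y(X)\vdash_{\mathbf{R}} A$ implies $Y(\cf(X))\vdash_{\mathbf{R}} A$---in order to reduce both parts of the lemma to claims about formulas only. The previous lemma already supplies one half of the required formula-level fact (that $\tau(A)\vdash_{\mathbf{R}} A$ for every formula $A$), so the only genuinely new induction needed is its converse, $A\vdash_{\mathbf{R}}\tau(A)$, which I will prove by induction on $A$.

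Part (ii) is then essentially immediate. Given $Y(X)\vdash_{\mathbf{R}} A$, the Read fact yields $Y(\cf(X))\vdash_{\mathbf{R}} A$. The previous lemma, specialized to the formula $\cf(X)$, gives $\tau(X)=\tau(\cf(X))\vdash_{\mathbf{R}}\cf(X)$, and a single application of Cut delivers $Y(\tau(X))\vdash_{\mathbf{R}} A$. For part (i), since $X\vdash_{\mathbf{R}}\cf(X)$, it suffices to prove the formula-level claim $A\vdash_{\mathbf{R}}\tau(A)$ and apply it with $A=\cf(X)$. The induction on $A$ is routine in the atomic, $\land$, $\lor$, $\neg$, and $\to$ cases: conjunction uses the IH together with $\land$I plus $\mathsf{eW}$; disjunction case-splits via $\lor$E followed by $\lor$I on each branch; the arrow case uses $\tau(A)\vdash_{\mathbf{R}} A$ (from the previous lemma) and the IH $B\vdash_{\mathbf{R}}\tau(B)$, combined via $\to$E and Cut, then discharged by $\to$I; and the negation case uses $\neg$I with the previous lemma together with $\mathbf{R}$'s double-negation introduction to obtain $\tau(A)\vdash_{\mathbf{R}}\neg\neg A$, which pairs with the identity on $\neg A$ to yield $\neg A\vdash_{\mathbf{R}}\neg\tau(A)$.

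The main obstacle is the fusion case: showing $A\circ B\vdash_{\mathbf{R}}\neg(\tau(A)\to\neg\tau(B))$. I will apply $\circ$E to reduce this to deriving $A;B\vdash_{\mathbf{R}}\neg(\tau(A)\to\neg\tau(B))$. Starting from the identity $\tau(A)\to\neg\tau(B)\vdash_{\mathbf{R}}\tau(A)\to\neg\tau(B)$ together with the IH $A\vdash_{\mathbf{R}}\tau(A)$, the rule $\to$E produces $(\tau(A)\to\neg\tau(B));A\vdash_{\mathbf{R}}\neg\tau(B)$; the semicolon-commutativity rule $\mathsf{C}$ (available in $\mathbf{R}$ but not in $\mathbf{B}$) gives $A;(\tau(A)\to\neg\tau(B))\vdash_{\mathbf{R}}\neg\tau(B)$; and an application of $\mathbf{R}$'s alternative negation-introduction rule $\neg\mathrm{I}_2$ together with the IH $B\vdash_{\mathbf{R}}\tau(B)$ yields $A;B\vdash_{\mathbf{R}}\neg(\tau(A)\to\neg\tau(B))$. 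This derivation is essentially the bottom half of the figure displayed in the proof of the previous lemma, redeployed with $\tau(A),\tau(B)$ in place of $A,B$; it is the step that genuinely leans on $\mathbf{R}$'s extra structural apparatus, which is exactly what limits the lemma to $\mathbf{R}$ rather than $\mathbf{B}$.
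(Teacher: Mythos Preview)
Your argument is correct, but it is organized differently from the paper's. The paper proves (i) and (ii) by a \emph{simultaneous} induction on the bunch $X$, treating the formula connectives and the bunch constructors (comma and semicolon) on equal footing; in particular, for the semicolon case of (i) it essentially re-derives $A\circ B\vdash_{\mathbf{R}}\neg(A\to\neg B)$ inside the bunch induction, and for (ii) it applies the previous lemma plus Cut separately at each connective. You instead factor the problem through $\cf$: Read's two facts about $\cf$ absorb all of the bunch structure at once, reducing both (i) and (ii) to the single formula-level claim $A\vdash_{\mathbf{R}}\tau(A)$, which you then prove by induction on the formula $A$. This is a cleaner decomposition: it avoids the simultaneous induction, it makes (ii) a two-line consequence of the previous lemma and Cut, and it isolates the one place where the extra $\mathbf{R}$-apparatus ($\mathsf{C}$ and $\neg\mathrm{I}_2$) is genuinely needed, namely the fusion clause of the formula induction. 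The paper's route, by contrast, keeps everything at the bunch level and thereby makes the parallelism between the semicolon and fusion cases more visible, at the cost of some repetition.
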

\begin{proof}
    By simultaneous induction on the complexity of $X$ in both parts of the lemma. In the base case of the induction, $X$ is an atomic formula, and both (i) and (ii) are immediate. In the inductive step, we must construct a natural deduction proof for each outermost connective. In the case of fusion, we defer the proof to a well-known result, which can be shown with mild elbow grease: $A\circ B\vdash_{\R}\neg(A\to\neg B)$ and if $Y(A\circ B)\vdash_{\R} C$, then $Y(\neg(A\to\neg B))\vdash_{\R} C$. In fact, the former bit of elbow grease will be fleshed out in a case below. We outline only two cases in full and leave the rest to the reader.

    First, suppose $X=A\to B$.
    \begin{enumerate}[i.]
        \item We want to show $A\to B\vdash_{\R} \tau(A)\to\tau(B)$. By the previous lemma, we can fill in the dots on the following derivation, which does the job:
        \begin{center}
            \begin{prooftree}
                \hypo{A\to B\Yright A\to B}
                \hypo{\vdots}
                \infer1{\tau(A)\Yright A}
                \infer2{A\to B;\tau(A)\Yright B}
                \hypo{\vdots}
                \infer1[IH]{B\Yright \tau(B)}
                \infer2[Cut]{A\to B;\tau(A)\Yright\tau(B)}
                \infer1{A\to B\Yright\tau(A)\to\tau(B)}
            \end{prooftree}
        \end{center}

        \item Suppose $Y(A\to B)\vdash_{\R} C$. By the previous lemma, $\tau(A)\to\tau(B)\vdash_{\R} A\to B$, so we can use CUT to immediately obtain $Y(\tau(A\to B))\vdash_{\R} C$.
    \end{enumerate}

    \noindent Suppose $X=X_1;X_2$. 
    \begin{enumerate}[i.]
        \item
            By IH, $X_1\vdash_{\mathbf{R}}\tau(X_1)=\tau(\cf(X_1))$ and $X_2\vdash_{\mathbf{R}}\tau(X_2)=\tau(\cf(X_2))$. Thus $X_1;X_2\vdash_{\mathbf{R}}\tau(\cf(X_1))\circ\tau(\cf(X_2))$. But for any two formulas $A$ and $B$, $A\circ B\vdash_{\mathbf{R}}\neg(A\to\neg B)$, as one can see from the following derivation:
            \begin{center}
                \begin{prooftree}
                    \hypo{A\circ B\vdash_{\mathbf{R}} A\circ B}
                    \hypo{A\to\neg B\vdash_{\mathbf{R}}A\to\neg B}
                    \hypo{A\vdash_{\mathbf{R}} A}
                    \infer2{A\to\neg B;A\vdash_{\mathbf{R}} \neg B}
                    \infer1{A;A\to\neg B\vdash_{\mathbf{R}}\neg B}
                    \hypo{B\vdash_{\mathbf{R}}B}
                    \infer2{A;B\vdash_{\mathbf{R}}\neg(A\to\neg B)}
                    \infer2{A\circ B\vdash_{\mathbf{R}}\neg(A\to\neg B)}
                \end{prooftree}
            \end{center}
            Thus, in particular, $\tau(\cf(X_1))\circ\tau(\cf(X_2))\vdash_{\mathbf{R}}\neg(\tau(\cf(X_1))\to\neg\tau(\cf(X_2)))$. So by an application of CUT, $X_1;X_2\vdash_{\mathbf{R}}\neg(\tau(\cf(X_1))\to\neg\tau(\cf(X_2)))=\tau(X_1;X_2)$.

        \item Suppose $Y(X_1;X_2)\vdash_{\R} A$. Then by two applications of induction hypothesis, $Y(\tau(X_1);\tau(X_2))\vdash_{\R} C$. It can be easily shown from this that $Y(\tau(X_1)\circ\tau(X_2))\vdash_{\R} C$. But as we saw in the proof of the preceding lemma, $\neg(\tau(X_1)\to\neg\tau(X_2))\vdash_{\mathbf{R}}\tau(X_1)\circ\tau(X_2)$. Thus by an application of CUT, $Y(\cf(X_1;X_2))\vdash_{\R} C$.
    \end{enumerate}
\end{proof}

Before proving our next result, we leave it to the reader to verify the following facts about $\mathbf{hR}$:
\begin{enumerate}[{Fact} 1.]
    \item If $\neg A\to B\in\mathbf{hR}$, then $\neg B\to A\in\mathbf{hR}$.\label{contrapose1}
    \item If $A\to(B\to C)\in\mathbf{hR}$, then $B\to(A\to C)\in\mathbf{hR}$.\label{permutation}
    \item If $A\to(\neg B\to\neg C)\in\mathbf{hR}$, then $A\to(C\to B)\in\mathbf{hR}$.\label{contrapose2}
    \item If $A\to B\in\mathbf{hR}$, then for all $C$, $(A\land C)\to(B\land C)\in\mathbf{hR}$.
    \item If $A\to B\in\mathbf{hR}$, then for all $C$, $(C\land A)\to(C\land B)\in\mathbf{hR}$.
    \item If $A\to B\in\mathbf{hR}$, then for all $C$, $\neg(A\to\neg C)\to\neg(B\to\neg C)\in\mathbf{hR}$.
    \item If $A\to B\in\mathbf{hR}$, then for all $C$, $\neg(C\to\neg A)\to\neg(C\to\neg B)\in\mathbf{hR}$.
    \item If $A\to B\in\mathbf{hR}$ and $B\to C\in\mathbf{hR}$, then $A\to C\in\mathbf{hR}$. 
\end{enumerate}

\begin{lemma}\label{substitution}
    If $\tau(A)\to\tau(Z)\in\mathbf{hR}$ and $Z$ is a subbunch of $Y$, then $\tau(Y(A))\to\tau(Y(Z))\in\mathbf{hR}$.
\end{lemma}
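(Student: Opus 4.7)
The plan is to induct on the complexity of $Y$ (measured as the size of its parsing tree). In the base case $Y$ is a propositional atom, so $Z$ must equal $Y$; then $Y(A) = A$, $Y(Z) = Y$, and the required $\tau(A) \to \tau(Y) \in \mathbf{hR}$ is exactly the hypothesis. In the inductive step, if $Z = Y$ we are again immediately done; otherwise $Z$ sits properly inside one of the immediate constituents of $Y$, and the inductive hypothesis applied to that constituent $W$ gives an $\mathbf{hR}$-theorem of the form $\tau(W(A)) \to \tau(W(Z))$. The task is then to push this implication outward through the top-level constructor of $Y$.

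Each case is handled by a monotonicity principle tailored to the outermost connective. For the comma and for $\land$ we invoke Facts 4 and 5 directly, since $\tau(Y_1, Y_2) = \tau(Y_1) \land \tau(Y_2)$. For $\lor$ we use the analogous monotonicity (derivable in $\mathbf{hR}$ from A5--A7 and Fact 8). For $\neg$, combining A9 ($\neg\neg C \to C$) with Fact 8 and then Fact 1 yields standard contraposition and so finishes the case. For $\to$, if $Z$ lies in the antecedent we apply suffixing (A11), and if it lies in the consequent we apply prefixing (A12). Finally, for the fusion $\circ$ and the semicolon---whose translations both take the shape $\neg(\tau(\cdot) \to \neg \tau(\cdot))$---Facts 6 and 7 are exactly what is needed to propagate the implication through the wrapping negations.

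The main obstacle is really just bookkeeping: since $Y$ can be either a bunch or a formula and $Z$ is an arbitrary subexpression, the induction formally spans both layers. Happily, because $\tau(Y_1, Y_2) = \tau(Y_1) \land \tau(Y_2)$ and $\tau(Y_1; Y_2)$ is structurally identical to $\tau(\cf(Y_1) \circ \cf(Y_2))$, a single induction on parsing-tree size covers the bunch layer and the formula layer by the same argument. The most delicate cases are thus the fusion and semicolon cases, where one must track the negations introduced by $\tau$; but since Facts 6 and 7 package precisely this maneuver, no real additional labor is required.
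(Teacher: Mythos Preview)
Your induction on $Y$ is the right idea and matches the paper's approach, but you have misread the scope of the lemma. In the paper, the notation $Y(Z)$ and the phrase ``$Z$ is a subbunch of $Y$'' refer to the \emph{bunch-level} structure only: $Z$ is reached from $Y$ by descending through commas and semicolons, stopping at atomic bunches (formulas). The paper's own proof makes this explicit: after the base case, ``the remaining four cases are handled by Facts 4--7''---two subcases for comma and two for semicolon, nothing more. Your treatment of comma and semicolon via Facts 4--5 and 6--7 is exactly right and is all that is needed.

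Your additional cases for the formula connectives are not merely superfluous; two of them are actually wrong. The positions under $\neg$ and in the antecedent of $\to$ are \emph{contravariant}. Concretely, suppose $Y = C \to D$ with $Z$ inside $C$. The inductive hypothesis gives $\tau(C(A)) \to \tau(C(Z))$, and you need $(\tau(C(A)) \to \tau(D)) \to (\tau(C(Z)) \to \tau(D))$. Suffixing (A11) would deliver this only from $\tau(C(Z)) \to \tau(C(A))$, which is the reverse of what you have. The $\neg$ case fails in the same way: contraposition from $\tau(C(A)) \to \tau(C(Z))$ yields $\neg\tau(C(Z)) \to \neg\tau(C(A))$, again the wrong direction. (The semicolon case works precisely because $\tau(Y_1;Y_2) = \neg(\tau(Y_1)\to\neg\tau(Y_2))$ stacks two contravariant positions, restoring covariance---which is what Facts 6 and 7 encode.) So if the lemma really ranged over arbitrary subexpressions it would be false; fortunately it does not, and once you restrict to the bunch layer your argument collapses to exactly the paper's four-case proof.
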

\begin{proof}
    By induction on Y. The base case is immediate and the remaining four cases are handled by Facts 4-7.
\end{proof}

\begin{lemma}\label{lem:r_to_hr}
    If $X\vdash_{\mathbf{R}} A$, then $\tau(X)\to\tau(A)\in\mathbf{hR}$.
\end{lemma}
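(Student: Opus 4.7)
The plan is to induct on the height of the natural-deduction derivation $X\vdash_{\mathbf{R}}A$. For the base case, the derivation is a single instance of (id), so $X=A$ and the goal $\tau(A)\to\tau(A)\in\mathbf{hR}$ is just axiom A1. For the inductive step, I would go rule-by-rule through Figure~\ref{fig:cons} (as modified in Figure 2), assume the IH for each premise, and derive the translation of the conclusion in $\mathbf{hR}$ using the axiom schemas, Rules R1--R4, Facts 1--8, and Lemma~\ref{substitution}.

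The cases split into three groups. The \emph{extensional} rules---($\lor\text{I}_1$), ($\lor\text{I}_2$), ($\lor\text{E}$), ($\land\text{I}$), ($\land\text{E}$), ($\neg\text{E}$), and the structural rules eB, eC, eW, eK---are routine: the translation $\tau$ sends commas to conjunctions and leaves $\lor$, $\land$, $\neg$ alone, so each such rule translates to something immediately available from A2--A9 together with Facts 4--5 and 8. The \emph{intensional operational} rules---($\to\text{I}$), ($\to\text{E}$), ($\circ\text{I}$), ($\circ\text{E}$), and the $\mathbf{R}$-specific ($\neg\text{I}_2$)---are handled by unfolding $\tau(X;Y)=\neg(\tau(X)\to\neg\tau(Y))$ and then using Facts 1--3 (the various contraposition/permutation moves) together with the $\mathbf{R}$-theorem that exhibits the residuation $A\circ B\to C\,\leftrightarrow\,A\to(B\to C)$ encoded through $\neg(\cdot\to\neg\cdot)$. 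Finally, the \emph{intensional structural} rules B, C, W for semicolon---which need to travel through $\tau$'s fusion encoding---reduce to the associativity, commutativity, and contraction of fusion in $\mathbf{R}$, available via A11--A14.

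The remaining rules are Cut and those elimination rules whose conclusion carries a bunch-context---($\lor\text{E}$), ($\land\text{E}$), ($\circ\text{E}$). These are handled uniformly by Lemma~\ref{substitution}: the inner IH produces $\tau(Z_1)\to\tau(Z_2)$ for the displaced subbunch (or for the appropriate formula summarizing a disjunction-case-split, in the ($\lor\text{E}$) case), Lemma~\ref{substitution} propagates this implication out through the surrounding context $Y(\cdot)$, and Fact 8 composes the result with the outer IH. For ($\lor\text{E}$) there is a small extra step: one first uses A7 and the two IHs $\tau(Y(A))\to\tau(C)$ and $\tau(Y(B))\to\tau(C)$ together with Lemma~\ref{substitution} to derive $\tau(Y(A\lor B))\to\tau(C)$, then composes with the IH for $X\Yright A\lor B$.

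The main obstacle, as I see it, is the bookkeeping around the fusion encoding in the ($\to\text{I}$), ($\circ\text{I}$), ($\circ\text{E}$), and ($\neg\text{I}_2$) cases, where the translation repeatedly shuffles an implication in $\mathbf{hR}$ between its direct form and its negation-of-arrow-to-negation form. Each shuffle, however, is exactly the content of Facts 1--3, so once one such case is written out in detail the others will be parallel; in the final write-up I would work ($\to\text{I}$), ($\circ\text{E}$), and one representative structural rule explicitly and leave the rest to the reader. A secondary obstacle worth flagging is that Lemma~\ref{substitution} as stated replaces a formula $A$ by a subbunch $Z$; for the ($\circ\text{E}$) and Cut cases where both sides of the swap are bunches, one first rewrites the bunch using $\cf$ (via the facts Read proved in Proposition 4.3) so that Lemma~\ref{substitution} applies to the resulting characteristic formula.
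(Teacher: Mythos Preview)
Your proposal is correct and follows essentially the same approach as the paper: induction on the derivation, with the base case via A1 and the inductive steps handled by Facts~1--8 together with Lemma~\ref{substitution}. The paper in fact writes out only the ($\to$I), ($\to$E), and ($\circ$E) cases explicitly (exactly along the lines you sketch), so your outline is if anything more thorough in cataloguing the remaining cases; your secondary worry about Lemma~\ref{substitution} is slightly overcautious, since $\tau$ is already defined on bunches via $\cf$, so no separate rewriting step is needed.
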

\begin{proof}
    By induction on $\vdash$. In the base case, $X=A$ and it suffices to observe that $\tau(A)\to\tau(A)$ is an instance of an axiom of $\mathbf{hR}$.

    Suppose the last rule was ($\to$I). Then by IH, $\neg(\tau(X)\to\neg\tau(A))\to\tau(B)\in\mathbf{hR}$. From here we reason as follows:
    \begin{align}
        \neg(\tau(X)\to\neg\tau(A))\to\tau(B)\in\mathbf{hR}
        & \text{ by IH} \\
        \neg\tau(B)\to(\tau(X)\to\neg\tau(A))\in\mathbf{hR}
        & \text{ from (1) by Fact~\ref{contrapose1}} \\
        \tau(X)\to(\neg\tau(B)\to\neg\tau(A))\in\mathbf{hR}
        & \text{ from (2) by Fact~\ref{permutation}} \\
        \tau(X)\to(\tau(A)\to\tau(B))\in\mathbf{hR}
        & \text{ from (3) by Fact~\ref{contrapose2}}
    \end{align}

    Suppose the last rule was ($\to$E). Then by IH, $\tau(X)\to(\tau(A)\to\tau(B))\in\mathbf{hR}$ and $\tau(Y)\to\tau(A)\in\mathbf{hR}$. So $\tau(A)\to(\tau(X)\to\tau(B))$ is a theorem of $\mathbf{hR}$ by Fact 2. But since $\tau(Y)\to\tau(A)$ is a theorem of $\mathbf{hR}$, so in fact $\tau(Y)\to(\tau(X)\to\tau(B))$ is a theorem of $\mathbf{hR}$. It follows that $\tau(Y)\to(\neg\tau(B)\to\neg\tau(X))$ is a theorem of $\mathbf{hR}$, and thus so is $\neg\tau(B)\to(\tau(Y)\to\neg\tau(X))$. From here we get that $\neg\tau(B)\to(\tau(X)\to\neg\tau(Y))$ is a theorem of $\mathbf{hR}$, and thus that $\neg(\tau(X)\to\neg\tau(Y))\to\tau(B))$ is as well, as required.

    Suppose the last rule was ($\circ$E). Then by IH, $\tau(X)\to\tau(A\circ B)\in\mathbf{hR}$ and $\tau(Y(A;B))\to\tau(C)\in\mathbf{hR}$. But $\tau(Y(A;B))=\tau(Y)(\tau(A;B))=\tau(Y)(\tau(A\circ B))=\tau(Y(A\circ B))$. Thus by Lemma~\ref{substitution}, $\tau(Y(X))\to\tau(Y(A\circ B))\in\mathbf{hR}$. So By Fact 8, $\tau(Y(X))\to\tau(C)\in\mathbf{hR}$.
\end{proof}

From this, it's nearly immediate to get that where we want to go:\bigskip

\noindent\textbf{Theorem~\ref{th:strongervsp}.}
    \emph{If $X\vdash_{\mathbf{R}} A$, then $X$ and $A$ share a variable.}
\begin{proof}
    If $X\vdash_{\mathbf{R}} A$, then $\tau(X)\to\tau(A)\in\mathbf{hR}$. So $\tau(X)$ and $\tau(A)$ share a variable. But by definition, for all $Y$, a variable occurs in $\tau(Y)$ iff it occurs in $Y$. So $X$ and $A$ share a variable.
\end{proof}

\section{Proofs of Facts About Reduced Sequences}

    We note that our proof of Theorem~\ref{th:red_unique} borrows heavily from the discussion in \cite{selinger2013lecturenoteslambdacalculus} concerning the proof of the Church-Rosser Theorem. There as here, the first thing to do is prove the following `one-step' lemma:

    \begin{lemma}\label{lemma:one_step}
        If $\overline{x}\squigto'\overline{w_1}$ and $\overline{x}\squigto'\overline{w_2}$, then there is $\overline{w_3}$ so that $\overline{w_1}\squigto'\overline{w_3}$ and $\overline{w_2}\squigto'\overline{w_3}$.
    \end{lemma}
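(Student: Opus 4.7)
The plan is to establish the diamond property by a direct case analysis on the relative positions of the two contracted redexes inside $\overline{x}$. Every left-hand side of a reduction rule has length exactly two, so each redex is determined by the position of its first letter. Two redexes in $\overline{x}$ therefore fall into exactly one of two patterns: their two-letter windows are disjoint, or they overlap in a single letter and together form a three-letter window.

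In the disjoint case, contracting one redex leaves the other completely intact. Writing the two contractions as $\overline{x}\squigto'\overline{w_1}$ and $\overline{x}\squigto'\overline{w_2}$, I would contract the still-present redex in $\overline{w_1}$ using the rule that produced $\overline{w_2}$, and symmetrically contract the still-present redex in $\overline{w_2}$ using the rule that produced $\overline{w_1}$; both paths land on the same $\overline{w_3}$. This step is entirely mechanical.

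In the overlapping case the three-letter window must match a redex in its first two letters and a second redex in its last two letters, so the middle letter is simultaneously the final letter of the first left-hand side and the initial letter of the second. Matching end-letters against start-letters across the five rules, the live windows are exactly $l\lambda r$, $r\lambda r$, $\lambda r\lambda$, $\rho r\lambda$, and $nnn$. For $r\lambda r$, $\lambda r\lambda$, and $nnn$ the two choices of redex yield the \emph{same} reduct (e.g.\ $r\lambda r$ reduces to $r$ under either rule 2 or rule 3), so $\overline{w_1}=\overline{w_2}=\overline{w_3}$ and there is nothing to verify.

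The main obstacle is the two remaining windows, $l\lambda r$ and $\rho r\lambda$, where one contraction creates a fresh redex that the other does not. For $l\lambda r$, rule 1 produces $\rho r$ while rule 3 produces $l$, and a single application of rule 4 on the first side sends $\rho r$ to $l$. Symmetrically, for $\rho r\lambda$, rule 4 produces $l\lambda$ while rule 2 produces $\rho$, and a single application of rule 1 sends $l\lambda$ to $\rho$. So in each of these two cases the common $\overline{w_3}$ is reached by one further contraction from one side and by zero contractions from the other. This forces one to read the conclusion of the lemma in the natural diamond-property sense: a common descendant reachable in \emph{at most} one further step from each side (equivalently, via the reflexive closure of $\squigto'$, or via $\squigto$ itself). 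With that reading the lemma holds, and a routine induction on the length of the two given reduction sequences, in the style of the Church--Rosser argument sketched in~\cite{selinger2013lecturenoteslambdacalculus}, then lifts this one-step diamond to the full uniqueness claim of Theorem~\ref{th:red_unique}.
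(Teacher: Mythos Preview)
Your proof is correct and follows essentially the same approach as the paper's: the same disjoint/overlapping case split, the same five three-letter windows, and the same resolution in each case. You are in fact slightly more careful than the paper in flagging that the $l\lambda r$ and $\rho r\lambda$ cases require reading $\squigto'$ reflexively on one side; the paper simply sets $\overline{w_3}=\overline{w_2}$ and asserts $\overline{w_2}\squigto'\overline{w_3}$ without comment.
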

    \begin{proof}
        First, suppose the reductions $\overline{x}\squigto'\overline{w_1}$ and $\overline{x}\squigto'\overline{w_2}$ act on disjoint parts of $\overline{x}$. That is suppose that $\overline{x}=\overline{s}x_{i}x_{i+1}\overline{t}x_{j}x_{j+1}\overline{u}$, $i+1< j$, and $\overline{w_1}=\overline{s}y\overline{t}x_{j}x_{j+1}\overline{u}$ while $\overline{w_2}=\overline{s}x_{i}x_{i+1}\overline{t}z\overline{u}$. Then clearly if $\overline{w_3}=\overline{s}y\overline{t}z\overline{u}$, we have that $\overline{w_1}\squigto'\overline{w_3}$ and $\overline{w_2}\squigto'\overline{w_3}$. 

        What remains are the cases where the reductions act on overlapping parts of $\overline{x}$, that is, where $\overline{x}=\overline{s}x_{i}x_{i+1}x_{i+2}\overline{t}$ and $\overline{w_1}=\overline{s}yx_{i+2}\overline{t}$ while $\overline{w_2}=\overline{s}x_iz\overline{t}$. There are five options for the subsequence $x_ix_{i+1}x_{i+2}$ for which this can happen:
        \begin{itemize}
            \item $l\lambda r$
            \item $r\lambda r$
            \item $\lambda r\lambda$
            \item $\rho r\lambda$
            \item $nnn$
        \end{itemize}
        We examine each case in turn.
        \begin{description}
            \item[$l\lambda r$-case] In this case, $\overline{w_1}=\overline{s}\rho r\overline{t}$ and $\overline{w_2}=\overline{s}l\overline{t}$. But then if $\overline{w_3}=\overline{w_2}$ we immediately have that $\overline{w_1}\squigto'\overline{w_3}$ and $\overline{w_2}\squigto'\overline{w_3}$.
            \item[$r\lambda r$-case] In this case, $\overline{w_1}=\overline{w_2}=\overline{s} r\overline{t}$ and we are immediately done.
            \item[$\lambda r\lambda$-case] In this case, $\overline{w_1}=\overline{w_2}=\overline{s} \lambda\overline{t}$ and we are immediately done.
            \item[$\rho r\lambda$-case] In this case, $\overline{w_1}=\overline{s}l\lambda\overline{t}$ and $\overline{w_2}=\overline{s}\rho\overline{t}$. But then if $\overline{w_3}=\overline{w_2}$ we immediately have that $\overline{w_1}\squigto'\overline{w_3}$ and $\overline{w_2}\squigto'\overline{w_3}$.
            \item[$nnn$-case] In this case, $\overline{w_1}=\overline{w_2}=\overline{s}n\overline{t}$ and we are immediately done.
        \end{description}
    \end{proof}
    
    \begin{proof}[Proof of Theorem~\ref{th:red_unique}]
        Let $\overline{x}\squigto'\overline{y_1}\squigto'\dots\squigto'\overline{y_{n}}\squigto'\overline{y}$ and $\overline{x}\squigto'\overline{z_1}\squigto'\dots\squigto'\overline{z_{m}}\squigto'\overline{z}$. We show by induction on $n+m$ that there is $\overline{w}$ so that $\overline{y}\squigto'\overline{u_1}\squigto'\dots\squigto'\overline{u_m}\squigto'\overline{w}$ and $\overline{z}\squigto'\overline{v_1}\squigto'\dots\squigto'\overline{v_n}\squigto'\overline{w}$.

        In words: if we can get from $\overline{x}$ to $\overline{y}$ in $n$ steps and we can get from $\overline{x}$ to $\overline{z}$ in $m$ steps, then there is $\overline{w}$ so that we can get from $\overline{y}$ to $\overline{w}$ in $m$ steps and we can get from $\overline{z}$ to $\overline{w}$ in $n$ steps.

        In the base case, $n=m=0$, so in fact $\overline{x}\squigto'\overline{y}$ and $\overline{x}\squigto'\overline{z}$. Here the result follows by Lemma~\ref{lemma:one_step}. 

        Now suppose that we have the result for $n+m\leq k$ and let $n+m=k+1$. Let the reduction sequences be $\overline{x}\squigto'\overline{y_1}\squigto'\dots\squigto'\overline{y_{n}}\squigto'\overline{y}$ and $\overline{x}\squigto'\overline{z_1}\squigto'\dots\squigto'\overline{z_{m}}\squigto'\overline{z}$.

        By IH, there is $\overline{w_1}$ so that $\overline{y_n}\squigto'\overline{u_1}\squigto'\dots\squigto'\overline{u_m}\squigto'\overline{w_1}$ and $\overline{z}\squigto'\overline{v_1}\squigto'\dots\squigto'\overline{v_{n-1}}\squigto'\overline{w_1}$. But then since $\overline{y_n}\squigto'\overline{y}$, IH also gives that there is $w_2$ so that $\overline{w_1}\squigto'\overline{w_2}$ and $\overline{y}\squigto'\overline{t_1}\squigto'\dots\squigto'\overline{t_m}\squigto'\overline{w_2}$. And since $\overline{w_1}\squigto'\overline{w_2}$, we also have that $\overline{z}\squigto'\overline{v_1}\squigto'\dots\squigto'\overline{v_{n-1}}\squigto'\overline{w_1}\squigto'\overline{w_2}$, giving us the reduction sequences we need.

        The reader may find it helpful to examine the following diagram:
        \begin{displaymath}
            \xymatrix@=1mm{
                & & & & \overline{x}\ar[dr]\ar[dl] & & & & \\
                & & & \overline{y_1}\ar[dr]\ar@{..>}[dl] & & \overline{z_1}\ar[dr]\ar[dl] & & & \\
                & & \overline{y_{n-1}}\ar[dr]\ar[dl] & & {}\ar[dr]\ar[dl] & & \ddots\ar[dr]\ar[dl] \\
                & \overline{y_n}\ar[dr]\ar[dl] & & {}\ar[dr]\ar[dl] & & {}\ar[dr]\ar[dl] & & \overline{z_m}\ar[dr]\ar[dl] & \\
                \overline{y}\ar[dr] & & \overline{u_1}\ar[dr]\ar[dl] & & {}\ar[dr]\ar[dl] & & {}\ar[dr]\ar[dl] & & \overline{z}\ar[dl]\\
                & \overline{t_1}\ar[dr] & & \ddots\ar[dr]\ar[dl] & & {}\ar[dr]\ar[dl] & & \overline{v_1}\ar@{..>}[dl] & \\
                & & \ddots\ar[dr] & & \overline{u_m}\ar[dr]\ar[dl] & & \overline{v_{n-1}}\ar[dl] & & \\
                & & & \overline{t_m}\ar[dr] & & \overline{w_1}\ar[dl] & & & \\
                & & & & \overline{w_2} & & & &
            }
        \end{displaymath}
    \end{proof}

    \begin{proof}[Proof of Theorem~\ref{th:canceling}]
        The `if' direction is obvious. For the `only if' direction, let $\overline{x}=x_1\dots x_n$, $\overline{y}=y_1\dots y_m$, and let $\overline{w}=w_1\dots w_k$. The proof is by induction on $n+m$.

        If $n+m=0$, then $\overline{x}=\overline{y}=\varepsilon$ and we are done. Now suppose we have the result for $n+m\leq J$ and let $n+m=J+1$.

        We split into cases. In principle there are 125 of them: one for each possible value for $w_1$, $x_n$ and $y_m$. But we can reduce that number a good bit.

        First up: if $w_1=l$ or $w_1=\rho$, then $\red(\overline{xw})=\overline{xw}$ and $\red(\overline{yw})=\overline{yw}$, so $\overline{x}=\overline{y}$ as needed. 

        Next up: note that if $x_n=y_m$, then $\overline{xw}$ and $\overline{yw}$ either both reduce or both don't reduce. In the former case, induction immediately finishes the job. In the latter, the same argument as in the previous paragraph does the work. Also the $(x_n=s,y_m=t)$ case and the $(x_n=t,y_m=s)$ cases are exactly parallel, so we need only consider one of each type.
        
        Finally, we can eliminate all remaining cases for which $w_1=n$, because we've already dealt with the one way for this to lead to a reduction, and thus all remaining cases don't reduce on either side and we've dealt with that sort of case already.

        We're left with the following 12 cases to consider: 
        \begin{center}\begin{tabular}{c|c|c}
            $w_1$ & $x_n$ & $y_m$ \\\hline
            $\lambda$ &  $l$   & $r$  \\
            $\lambda$ &  $l$   & $\lambda$  \\
            $\lambda$ &  $l$   & $\rho$  \\
            $\lambda$ &  $r$   & $\lambda$  \\
            $\lambda$ &  $r$ & $\rho$  \\
            $\lambda$ &  $\lambda$  & $\rho$  \\
            $r$ &  $l$   & $r$  \\
            $r$ &  $l$   & $\lambda$  \\
            $r$ &  $l$   & $\rho$  \\
            $r$ &  $r$   & $\lambda$  \\
            $r$ &  $r$   & $\rho$  \\
            $r$ &  $\lambda$   & $\rho$  \\
        \end{tabular}\end{center}
        It turns out that all of these cases are impossible.\footnote{Reminder: we're assuming that $\overline{x}$, $\overline{y}$, and $\overline{w}$ are all reduced and that $\overline{xw}$ and $\overline{yw}$ have the same reduction. It's only in that context that none of these cases are possible.} For example, the $\lambda$, $l$, $r$ case at the top cannot happen. Suppose for contradiction that it did. Then $\overline{xw}\squigto x_1\dots x_{n-1}\rho w_2\dots w_k$ while $\overline{yw}\squigto y_1\dots y_{m-1}w_2\dots w_k$. But then by the inductive hypothesis, $y_1\dots y_{m-1}=x_1\dots x_{n-1}\rho$, so $y_{m-1}=\rho$. But since $y_m=r$, it would then follow that $\overline{y}\not\in\rseq$, a contradiction. 

        The second case is even more directly impossible. In that case, $\red(\overline{xw})=x_1\dots x_{n-1}\rho w_2\dots w_k$, but $\red(\overline{yw})=y_1\dots y_{m-1}\lambda\lambda w_2\dots w_k$. But these simply aren't identical reductions, which is a contradiction.

        And now we leave the reader to check that each of the remaining 10 cases is also impossible in one of these two ways.
    \end{proof}

    \begin{proof}[Proof of Corollary~\ref{cor:replace}]
       Suppose $\red(\overline{z_1w})=\red(\overline{z_2w})$. Then by Corollary~\ref{cor:redred},\\$\red(\red(\overline{z_1})\red(\overline{w}))=\red(\red(\overline{z_2})\red(\overline{w}))$. So by Theorem~\ref{th:canceling}, $\red(\overline{z_1})=\red(\overline{z_2})$. Thus $\red(\red(\overline{z_1})\red(\overline{y}))=\red(\red(\overline{z_2})\red(\overline{y}))$. So again by Corollary~\ref{cor:redred}, $\red(\overline{z_1y})=\red(\overline{z_2y})$.
    \end{proof}

\bibliographystyle{plain}
\bibliography{biblio}

\begin{thebibliography}{10}

\bibitem{andersonbelnap1975}
Alan~Ross Anderson and Nuel~D. Belnap.
\newblock {\em Entailment: The Logic of Relevance and Neccessity, Vol. I}.
\newblock Princeton University Press, 1975.

\bibitem{Angell1977}
R.~B. Angell.
\newblock Three systems of first degree entailment.
\newblock {\em Journal of Symbolic Logic}, 42(1):147, 1977.

\bibitem{Angell1989}
R.~B. Angell.
\newblock Deducibility, entailment and analytic containment.
\newblock In J.~Norman and R.~Sylvan, editors, {\em Directions in Relevant
  Logic}, Reason and Argument, pages 119--143. Kluwer Academic Publishers,
  Boston, MA, 1989.

\bibitem{belnap1960}
Nuel~D. Belnap.
\newblock Entailment and relevance.
\newblock {\em The Journal of Symbolic Logic}, 25(2):144--146, 1960.

\bibitem{Berto2022}
F.~Berto.
\newblock {\em Topics of Thought}.
\newblock Oxford University Press, Oxford, 2022.

\bibitem{brady1984}
Ross~T. Brady.
\newblock Depth relevance of some paraconsistent logics.
\newblock {\em Studia Logica: An International Journal for Symbolic Logic},
  43(1/2):63--73, 1984.

\bibitem{ferguson2023topic}
Thomas~Macaulay Ferguson and Shay~Allen Logan.
\newblock Topic transparency and variable sharing in weak relevant logics.
\newblock {\em Erkenntnis}, pages 1--28, 2023.

\bibitem{Fine2016}
K.~Fine.
\newblock Angellic content.
\newblock {\em Journal of Philosophical Logic}, 45(2):199--226, 2016.

\bibitem{logan2021strong}
Shay~Allen Logan.
\newblock Strong depth relevance.
\newblock {\em The Australasian Journal of Logic}, 18(6):645--656, 2021.

\bibitem{logan2022}
Shay~Allen Logan.
\newblock Depth relevance and hyperformalism.
\newblock {\em Journal of Philosophical Logic}, 51(4):721--737, 2022.

\bibitem{logan2023correction}
Shay~Allen Logan.
\newblock Correction to: Depth relevance and hyperformalism.
\newblock {\em Journal of Philosophical Logic}, 52(4):1235--1235, 2023.

\bibitem{logan2024}
Shay~Allen Logan.
\newblock {\em Relevance Logic}.
\newblock Cambridge University Press, 2024.

\bibitem{Read1988}
Stephen Read.
\newblock {\em Relevant Logic: A Philosophical Examination of Inference}.
\newblock Wiley-Blackwell, Oxford, 1988.

\bibitem{Restall2000}
G.~Restall.
\newblock {\em An Introduction to Substructural Logics}.
\newblock Routledge, New York, 2000.

\bibitem{robmen2014}
Gemma Robles and Jos\'e~M. M\'{e}ndez.
\newblock Generalizing the depth relevance condition: Deep relevant logics not
  included in {R}-mingle.
\newblock {\em Notre Dame Journal of Formal Logic}, 55(1):107--127, 2014.

\bibitem{RLR}
Richard Routley, Val Plumwood, Robert~K. Meyer, and Ross~T. Brady.
\newblock {\em Relevant Logics and Their Rivals}.
\newblock Ridgeview, 1982.

\bibitem{selinger2013lecturenoteslambdacalculus}
Peter Selinger.
\newblock Lecture notes on the lambda calculus, 2013.

\bibitem{worley2024}
Blane Worley.
\newblock Proof invariance.
\newblock {\em Australasian Journal of Logic}, forthcoming.

\bibitem{Yablo2014}
S.~Yablo.
\newblock {\em Aboutness}.
\newblock Princeton University Press, Princeton, 2014.

\end{thebibliography}

\end{document}